\documentclass[11pt,reqno]{amsart}

\usepackage[a4paper,margin=1in]{geometry}
\usepackage{amsmath,amssymb,amsthm,mathtools}
\usepackage{mathrsfs}
\usepackage{graphicx}
\usepackage{booktabs}
\usepackage{float}
\usepackage{placeins}
\usepackage{enumitem}
\usepackage[colorlinks=true,linkcolor=blue,citecolor=blue,urlcolor=blue]{hyperref}

\numberwithin{equation}{section}

\newtheorem{theorem}{Theorem}[section]
\newtheorem{proposition}[theorem]{Proposition}
\newtheorem{lemma}[theorem]{Lemma}
\newtheorem{corollary}[theorem]{Corollary}

\theoremstyle{definition}
\newtheorem{definition}[theorem]{Definition}

\theoremstyle{remark}
\newtheorem{remark}[theorem]{Remark}

\DeclareMathOperator{\Prob}{\mathbb P}
\DeclareMathOperator{\E}{\mathbb E}
\DeclareMathOperator{\erf}{erf}

\newcommand{\dd}{\,d}
\newcommand{\rhozero}{\rho_{\rm z}}
\newcommand{\lambdazero}{\lambda_{\rm z}}

\title[Finite-Degree Probabilistic Zero Certificates]{Finite-Degree Probabilistic Zero Certificates for Random Polynomials}

\author{Sajad A. Sheikh}
\address{Department of Mathematics, University of Kashmir, South Campus, Jammu and Kashmir 192101, India}
\email{sajadsheikh@uok.edu.in}

\date{}
\subjclass[2020]{Primary 30C15, 60G50; Secondary 30C10, 30C80, 60E15, 60F10, 65H05}

\keywords{Random polynomials, zero localization, probabilistic zero bounds, Cauchy certificates, Rouch\'e certificates, annular zero bounds, Gaussian coefficients, dependent Cauchy ratios, finite-degree estimates, Monte Carlo simulation}
\begin{document}

\begin{abstract}
Classical zero-localization theorems give deterministic certificates that all zeros of a polynomial lie in a prescribed disk, annulus, or related region.  When the coefficients are random, each such deterministic certificate becomes a random variable on coefficient space.  This paper develops a finite-degree certificate method for random polynomial localization and extends the author's earlier joint work with Mir \cite{SheikhMir2024}.  The main result concerns Gaussian polynomials with random leading coefficient: the Cauchy ratios are marginally standard Cauchy, yet they are dependent through their common denominator.  We derive the exact dependence-aware certificate integral and prove that its inverse confidence radius has order \(\sqrt{\log n}\), while a fictitious independent-Cauchy model has order \(n\).  We also obtain monic coefficient-law certificates, sub-Weibull confidence radii, annular certificates via reversal, Rouch\'e--Chernoff certificates, and an optimized scaled Cauchy envelope.  A reproducible Monte Carlo study for monic Gaussian polynomials compares the classical Cauchy radius, the optimized Cauchy envelope, the annular certificate, and the Rouch\'e radius.  Across \(5000\) samples for each of \(n=20,50,100\), the Rouch\'e certificate gives the sharpest outer radii, the optimized Cauchy envelope substantially improves the classical Cauchy radius, and all tested certificates satisfy their deterministic containment inequalities numerically.
\end{abstract}

\maketitle

\section{Introduction }

Zeros of polynomials occupy a classical meeting point of algebra,
complex analysis, approximation theory, and spectral theory.  Their
location controls stability of recurrences, invertibility of polynomial
filters, asymptotic behaviour of algebraic equations, and the geometry of
analytic continuation.  A large deterministic literature therefore seeks
explicit regions, expressed directly in terms of coefficients, that must
contain all zeros of a given polynomial.  Cauchy's theorem, Rouch\'e's
theorem, Fujiwara-type bounds, Pellet-type bounds, and annular localization
criteria are standard examples of this coefficient-to-zero philosophy
\cite{Marden,RahmanSchmeisser}.

Random polynomial theory adds a probabilistic layer to this classical
problem.  Once the coefficients are sampled from a prescribed law, the zero
set becomes a random point configuration in the complex plane.  Foundational
work of Kac concerns real zeros of random algebraic equations
\cite{Kac}; the Edelman--Kostlan formula gives a geometric description of
expected zero densities \cite{EdelmanKostlan}; and later logarithmic-tail
results describe limiting zero measures for broad coefficient ensembles
\cite{KabluchkoZaporozhets}.  These results give global information about
zero statistics, limiting distributions, or expected counts.

The question studied here is finite-degree and certificate-oriented.  A
deterministic zero-localization theorem is evaluated directly on the random
coefficient vector, producing a random certificate for zero containment.
The distribution of this certificate then yields an explicit lower bound
for the probability that all zeros lie in a prescribed disk, annulus, or
Rouch\'e region.  This approach avoids the need for the joint distribution
of zeros and gives computable confidence radii at the same finite degree at
which the polynomial is sampled.

Within this setting, the present paper extends the author's earlier joint
work with Mir \cite{SheikhMir2024} on probabilistic zero bounds for random
polynomials.  The earlier paper introduced Cauchy-type probabilistic bounds
for certain random polynomials, whereas the present work develops a general
certificate-transfer framework and proves several new localization
certificates.  This provenance is part of the mathematical framing: the
Cauchy--Gaussian viewpoint in \cite{SheikhMir2024} is used as a starting
certificate, and the present manuscript turns that viewpoint into a modular
finite-degree method.  The earlier paper introduced Cauchy-type probabilistic bounds for certain random polynomials, whereas the present work develops a general certificate-transfer framework and proves several new localization certificates.  This provenance is part of the mathematical framing: the Cauchy--Gaussian viewpoint in \cite{SheikhMir2024} is used as a starting certificate, and the present manuscript turns that viewpoint into a modular finite-degree method.

Let
\begin{equation}
        P(z)=\sum_{k=0}^{n}A_kz^k,
        \qquad A_n\neq 0,
\end{equation}
be a random polynomial.  Its outer zero radius is
\begin{equation}
        \rhozero(P)=\max\{|z|:P(z)=0\}.
\end{equation}
The finite-degree localization problem is to estimate
\begin{equation}
        \Pi_P(c)=\Prob\{\rhozero(P)\leq c\},
        \qquad c\geq 0,
\end{equation}
without integrating over the joint law of the zeros.  A radius \(c\) is a confidence radius at level \(p\) when \(\Pi_P(c)\geq p\).

Classical polynomial theory supplies many deterministic estimates for \(\rhozero(P)\).  Cauchy's theorem gives a disk from coefficient ratios; Rouch\'e's theorem turns dominance on a circle into zero containment; Fujiwara-type and Pellet-type estimates use coefficient scales more efficiently.  Standard sources for zero localization and the geometry of polynomial zeros include Marden \cite{Marden} and Rahman--Schmeisser \cite{RahmanSchmeisser}; Fujiwara's classical estimate is especially relevant for the scaled Cauchy envelope used below \cite{Fujiwara1916}.

Random polynomial theory supplies the surrounding probabilistic context.  Kac's work on real zeros \cite{Kac}, the Littlewood--Offord estimates \cite{LittlewoodOfford}, the Edelman--Kostlan geometric formula \cite{EdelmanKostlan}, the complex-zero analysis of Shepp--Vanderbei \cite{SheppVanderbei}, the angular distribution theorem of Erd\H{o}s--Tur\'an \cite{ErdosTuran}, and the logarithmic-tail zero-measure results of Kabluchko--Zaporozhets \cite{KabluchkoZaporozhets} address global zero statistics, limiting measures, or expected root counts.  The present question has a finite-degree certificate form: given a deterministic root bound and a coefficient distribution, estimate the probability that the deterministic certificate succeeds.

The certificate viewpoint is simple but powerful.  Suppose that a measurable coefficient functional \(B(P)\) satisfies
\begin{equation}
        \rhozero(P)\leq B(P)
        \qquad \text{almost surely}.
\end{equation}
Then
\begin{equation}
        \{B(P)\leq c\}\subseteq \{\rhozero(P)\leq c\},
\end{equation}
and hence
\begin{equation}
        \Prob\{B(P)\leq c\}\leq \Pi_P(c).
\end{equation}
Thus a deterministic zero-location theorem becomes a lower bound for a zero-containment probability after the coefficient law is pushed through the certificate.  This is the formal version of the Cauchy-based idea used in \cite{SheikhMir2024} and in related probabilistic Enestr\"om--Kakeya work \cite{SheikhMirDarAlmanjahieAlshahrani2023}.

The central contribution of the present paper is the dependence-aware Gaussian ratio theorem.  For the general Gaussian polynomial
\begin{equation}
        Q_n(z)=\sum_{k=0}^{n}X_kz^k,
        \qquad X_0,\ldots,X_n\stackrel{\rm i.i.d.}{\sim}N(0,1),
\end{equation}
Cauchy's bound involves the ratios \(X_k/X_n\), \(0\leq k\leq n-1\).  Each ratio is standard Cauchy, yet the ratios share the same denominator.  The correct certificate probability is therefore
\begin{equation}
        I_n(r)=\sqrt{\frac{2}{\pi}}
        \int_0^\infty e^{-x^2/2}
        \left[\erf\left(\frac{rx}{\sqrt2}\right)\right]^n\dd x.
\end{equation}
We prove that the inverse confidence scale satisfies
\begin{equation}
        I_n^{-1}(p)=\frac{b_n}{a_p}\bigl(1+o(1)\bigr),
        \qquad n\to\infty,
\end{equation}
where \(b_n\) is the half-normal extreme-value quantile and \(a_p\) is the half-normal upper-tail quantile.  Consequently, the dependence-aware scale is \(\sqrt{\log n}\).  A model that treats the ratios as independent Cauchy variables gives a linear scale in \(n\).  This comparison is the main mathematical distinction between the present paper and a direct repetition of marginal Cauchy calculations.

The theoretical certificates are complemented by a finite-degree simulation study.  For monic Gaussian polynomials, the zeros are computed only as diagnostics, while the certificates are computed directly from the coefficient vector.  This comparison measures the sharpness of the classical Cauchy radius, the optimized scaled Cauchy envelope, the Rouch\'e radius, and the two-sided annular certificate.  The numerical results show that the Rouch\'e radius is the most efficient outer certificate among these four in the sampled regimes, while the optimized Cauchy envelope gives a stable improvement over the unscaled Cauchy radius.

The paper is organized as follows.  Section~2 gives the certificate-transfer principle.  Section~3 develops monic Cauchy certificates, general coefficient laws, sub-Weibull confidence radii, and a scaled Cauchy envelope.  Section~4 proves the dependence-aware Gaussian ratio certificate.  Section~5 proves the sharp inverse asymptotics and compares the result with independent Cauchy maxima.  Section~6 gives annular and Rouch\'e--Chernoff extensions.  Section~7 reports the numerical certificate comparison.  Section~8 concludes.

\section{Certificate-transfer principle}

Let \((\Omega,\mathscr F,\Prob)\) be a probability space.  A random polynomial of degree \(n\) is a measurable map
\begin{equation}
        \omega\mapsto P_\omega(z)=\sum_{k=0}^{n}A_k(\omega)z^k,
\end{equation}
with \(A_n\neq0\) almost surely.  Zeros are counted with multiplicity whenever multiplicities matter.

\begin{definition}[Zero radius and zero-containment probability]
The zero radius and zero-containment probability of \(P\) are
\begin{equation}
        \rhozero(P)=\max\{|z|:P(z)=0\},
        \qquad
        \Pi_P(c)=\Prob\{\rhozero(P)\leq c\}.
\end{equation}
A radius \(c\) is a probabilistic zero bound at confidence \(p\) when \(\Pi_P(c)\geq p\).
\end{definition}

\begin{definition}[Outer and inner certificates]
A measurable random variable \(B(P)\geq0\) is an outer zero certificate if
\begin{equation}
        \rhozero(P)\leq B(P)
        \qquad \text{almost surely}.
\end{equation}
A measurable random variable \(L(P)\geq0\) is an inner zero certificate if
\begin{equation}
        L(P)\leq \rhozero(P)
        \qquad \text{almost surely}.
\end{equation}
\end{definition}

\begin{definition}[Certificate distribution and certificate radius]
For an outer certificate \(B\), define
\begin{equation}
        F_B(c)=\Prob\{B(P)\leq c\},
        \qquad
        R_B(p)=\inf\{c\geq0:F_B(c)\geq p\}.
\end{equation}
\end{definition}

\begin{theorem}[Certificate-transfer principle]
Let \(P\) be a random polynomial.
\begin{enumerate}[label=\textup{(\roman*)}]
\item If \(B(P)\) is an outer zero certificate, then
\begin{equation}
        \Pi_P(c)\geq F_B(c),
        \qquad c\geq0.
\end{equation}
\item If \(L(P)\) is an inner zero certificate, then
\begin{equation}
        \Pi_P(c)\leq \Prob\{L(P)\leq c\},
        \qquad c\geq0.
\end{equation}
\item If \(F_B(c)\geq p\), then \(c\) is a probabilistic zero bound at confidence \(p\).
\end{enumerate}
\end{theorem}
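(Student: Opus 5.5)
All three parts follow from event inclusion and monotonicity of \(\Prob\), so the plan is to reduce each to a set inclusion that holds almost surely. Let \(\Omega_0\subseteq\Omega\) be the full-measure event on which \(A_n\neq0\) and the relevant certificate inequality holds. Then \(\rhozero(P)\) is a well-defined finite quantity on \(\Omega_0\): the maximum modulus over the finitely many zeros of a nonzero polynomial of degree \(n\). One point needs a word here. The statement implicitly treats \(\{\rhozero(P)\leq c\}\) as an event, so I would note that \(\rhozero(P)\) is measurable, being a continuous function of the coefficient vector on \(\{A_n\neq0\}\) because zeros depend continuously on the coefficients. If one prefers to avoid this, the same argument goes through with inner and outer probabilities, since every inclusion used below holds pointwise on \(\Omega_0\).

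For (i), fix \(c\geq0\). On \(\Omega_0\) we have \(\rhozero(P)\leq B(P)\), so \(B(P)\leq c\) forces \(\rhozero(P)\leq c\). This gives
\begin{equation*}
        \{B(P)\leq c\}\cap\Omega_0\subseteq\{\rhozero(P)\leq c\}.
\end{equation*}
Since \(\Prob(\Omega_0)=1\), monotonicity yields \(F_B(c)=\Prob(\{B(P)\leq c\}\cap\Omega_0)\leq\Pi_P(c)\).

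For (ii), the same argument runs in reverse. On \(\Omega_0\) we have \(L(P)\leq\rhozero(P)\), so \(\rhozero(P)\leq c\) implies \(L(P)\leq c\). Hence
\begin{equation*}
        \{\rhozero(P)\leq c\}\cap\Omega_0\subseteq\{L(P)\leq c\},
\end{equation*}
which gives \(\Pi_P(c)\leq\Prob\{L(P)\leq c\}\).

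For (iii), combine (i) with the hypothesis: \(\Pi_P(c)\geq F_B(c)\geq p\). By the definition of a probabilistic zero bound at confidence \(p\), this is exactly the claim. As a remark, not needed for the theorem, right-continuity of the distribution function \(F_B\) shows that the infimum in \(R_B(p)\) is attained, so \(R_B(p)\) is itself a confidence radius at level \(p\).

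There is no substantive obstacle. The only step requiring care is the measurability of \(\rhozero(P)\) together with the handling of the null set where \(A_n=0\) or the certificate inequality fails, and both are addressed in the first paragraph.
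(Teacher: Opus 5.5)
Your proof is correct and follows the paper's argument: both reduce (i) and (ii) to the event inclusions \(\{B(P)\leq c\}\subseteq\{\rhozero(P)\leq c\}\) and \(\{\rhozero(P)\leq c\}\subseteq\{L(P)\leq c\}\) plus monotonicity of \(\Prob\), and both deduce (iii) from (i). Your handling of the full-measure event \(\Omega_0\) and of the measurability of \(\rhozero(P)\) spells out details that the paper's proof leaves implicit.
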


\begin{proof}
For an outer certificate,
\begin{equation}
        \{B(P)\leq c\}\subseteq \{\rhozero(P)\leq c\}.
\end{equation}
Taking probabilities gives \(F_B(c)\leq\Pi_P(c)\).  For an inner certificate,
\begin{equation}
        \{\rhozero(P)\leq c\}\subseteq \{L(P)\leq c\}.
\end{equation}
Taking probabilities gives the upper estimate.  The confidence statement follows from the first assertion.
\end{proof}

\begin{remark}
The theorem abstracts the Cauchy-bound mechanism used in \cite{SheikhMir2024}.  The later results differ through the choice of certificate and through the probability law pushed through that certificate.
\end{remark}

\begin{proposition}[Vieta inner certificate]
Let
\begin{equation}
        P(z)=z^n+a_{n-1}z^{n-1}+\cdots+a_0
\end{equation}
be monic.  Then
\begin{equation}
        |a_0|^{1/n}\leq \rhozero(P).
\end{equation}
Consequently, if \(a_0\) is random, then
\begin{equation}
        \Pi_P(c)\leq \Prob\{|a_0|\leq c^n\}.
\end{equation}
\end{proposition}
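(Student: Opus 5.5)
The plan is to factor \(P\) over \(\mathbb C\) and read off the constant term through Vieta's formulas, then feed the resulting inequality into part (ii) of the certificate-transfer principle.

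\textbf{Step 1: the deterministic inequality.} Since \(P\) is monic of degree \(n\geq1\), write
\begin{equation*}
P(z)=\prod_{j=1}^{n}(z-\zeta_j),
\end{equation*}
with the zeros \(\zeta_1,\ldots,\zeta_n\) repeated according to multiplicity. Evaluating at \(z=0\) gives \(a_0=(-1)^n\prod_{j}\zeta_j\). Taking moduli and using \(|\zeta_j|\leq\rhozero(P)\) for every \(j\), we get
\begin{equation*}
|a_0|=\prod_{j=1}^{n}|\zeta_j|\leq \rhozero(P)^n .
\end{equation*}
The map \(t\mapsto t^{1/n}\) is increasing on \([0,\infty)\), so taking \(n\)-th roots yields \(|a_0|^{1/n}\leq\rhozero(P)\). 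The case \(a_0=0\) is included automatically, since then the left side is \(0\).

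\textbf{Step 2: the probabilistic consequence.} For random coefficients, Step 1 holds pointwise on \(\Omega\). Hence \(L(P)=|a_0|^{1/n}\) is an inner zero certificate, and it is measurable because it is a continuous function of \(a_0\). Part (ii) of the certificate-transfer principle then gives
\begin{equation*}
\Pi_P(c)\leq\Prob\{|a_0|^{1/n}\leq c\}.
\end{equation*}
For \(c\geq0\), the events \(\{|a_0|^{1/n}\leq c\}\) and \(\{|a_0|\leq c^n\}\) coincide, again by monotonicity of \(t\mapsto t^n\) on \([0,\infty)\). This is the claimed bound.

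There is no real obstacle here. The only points needing care are that the zeros are counted with multiplicity, so that the product formula for \(a_0\) is exact, and that the root and power maps are monotone on the nonnegative reals, so that the two events really are the same.
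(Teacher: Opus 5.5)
Your proposal is correct and follows essentially the same route as the paper: Vieta's formula gives \(|a_0|=\prod_j|\zeta_j|\leq\rhozero(P)^n\), and the probability bound then follows from the inner-certificate part of the certificate-transfer principle. Your added remarks on multiplicity, measurability of \(|a_0|^{1/n}\), and the identity of the events \(\{|a_0|^{1/n}\leq c\}\) and \(\{|a_0|\leq c^n\}\) for \(c\geq0\) simply make explicit what the paper leaves implicit.
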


\begin{proof}
Let \(\zeta_1,\ldots,\zeta_n\) be the zeros of \(P\).  Vieta's formula gives
\begin{equation}
        |a_0|=|\zeta_1\cdots\zeta_n|.
\end{equation}
Since \(|\zeta_j|\leq\rhozero(P)\) for every \(j\),
\begin{equation}
        |a_0|\leq \rhozero(P)^n.
\end{equation}
The probability estimate follows from the inner-certificate part of the transfer principle.
\end{proof}

\section{Cauchy certificates and monic coefficient laws}

This section develops the monic finite-degree certificates that generalize the Cauchy-type estimates in \cite{SheikhMir2024}.  The results are stated for general coefficient laws first, and the Gaussian formulas are then obtained by substituting the half-normal distribution function.

Let
\begin{equation}
        P_n(z)=z^n+\sum_{k=0}^{n-1}X_kz^k,
\end{equation}
where \(X_0,\ldots,X_{n-1}\) are independent copies of a real or complex random variable \(X\).  Put
\begin{equation}
        H(t)=\Prob\{|X|\leq t\},
        \qquad t\geq0.
\end{equation}

\begin{theorem}[Exact monic Cauchy-certificate probability]
For \(c\geq1\),
\begin{equation}
        \Pi_{P_n}(c)\geq H(c-1)^n.
\end{equation}
Consequently, for \(0<p<1\), the radius
\begin{equation}
        c_p=1+H^{-1}(p^{1/n}),
        \qquad
        H^{-1}(u)=\inf\{t\geq0:H(t)\geq u\},
\end{equation}
is a probabilistic zero bound at confidence \(p\).
\end{theorem}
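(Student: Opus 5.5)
The plan is to combine Cauchy's classical bound with the certificate-transfer principle of Section~2. For a monic polynomial \(z^n+\sum_{k<n}a_kz^k\), Cauchy's theorem states that every zero satisfies \(|z|<1+\max_{0\le k\le n-1}|a_k|\). Rather than quoting it, I would prove the version I need directly. Suppose \(|z|>1\) and \(M=\max_k|a_k|\). Then
\begin{equation*}
|P_n(z)|\geq |z|^n-M\sum_{k=0}^{n-1}|z|^k> |z|^n-M\frac{|z|^n}{|z|-1}=|z|^n\,\frac{|z|-1-M}{|z|-1},
\end{equation*}
which is \(\geq0\) once \(|z|\geq 1+M\). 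Hence \(P_n(z)\neq0\) there. Zeros with \(|z|\le1\) trivially satisfy the bound as well. So \(B(P_n)=1+\max_k|X_k|\) is an outer zero certificate in the sense of the definition, and it is clearly measurable.

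Next, part (i) of the certificate-transfer principle gives
\begin{equation*}
\Pi_{P_n}(c)\geq \Prob\{1+\max_k|X_k|\le c\}=\Prob\{|X_k|\le c-1,\ 0\le k\le n-1\}.
\end{equation*}
For \(c\geq1\) the threshold \(c-1\) is nonnegative. By independence and identical distribution of \(X_0,\dots,X_{n-1}\), the right side factorizes as \(H(c-1)^n\). This is the first claim, and it holds with equality for the certificate distribution, which justifies the word ``exact.''

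For the confidence radius, set \(t_p=H^{-1}(p^{1/n})\). Since \(p^{1/n}\in(0,1)\) and \(H\) is a distribution function with limit \(1\), the infimum is finite. By right-continuity of \(H\), the infimum is attained, so \(H(t_p)\geq p^{1/n}\). Then \(c_p=1+t_p\geq1\), and \(\Pi_{P_n}(c_p)\geq H(t_p)^n\geq p\), so part (iii) of the transfer principle applies.

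The only delicate points are minor. The first is the boundary case \(|z|=1+M\), which the strict inequality above handles; when \(M=0\) all zeros are at the origin. The second is that right-continuity of \(H\) is what guarantees \(H(H^{-1}(u))\ge u\); without it the quantile step could fail. I expect no genuine obstacle beyond stating these carefully.
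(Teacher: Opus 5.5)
Your proof is correct and follows the paper's route: Cauchy's bound \(\rhozero(P_n)\le 1+\max_k|X_k|\) serves as an outer certificate, the event reduces to \(\Prob\{\max_k|X_k|\le c-1\}\), and independence factorizes this into \(H(c-1)^n\). The only difference is that you prove Cauchy's bound directly and spell out the right-continuity step \(H(H^{-1}(u))\ge u\), which the paper compresses into the remark that the quantile assertion follows immediately.
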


\begin{proof}
Cauchy's theorem gives
\begin{equation}
        \rhozero(P_n)
        \leq 1+\max_{0\leq k\leq n-1}|X_k|.
\end{equation}
Therefore
\begin{align}
        \Pi_{P_n}(c)
        &\geq
        \Prob\left\{1+\max_{0\leq k\leq n-1}|X_k|\leq c\right\} \\
        &=
        \Prob\left\{\max_{0\leq k\leq n-1}|X_k|\leq c-1\right\} \\
        &=
        \prod_{k=0}^{n-1}\Prob\{|X_k|\leq c-1\}
        =H(c-1)^n.
\end{align}
The quantile assertion follows immediately.
\end{proof}

\begin{corollary}[Gaussian monic Cauchy certificate]
Let \(X_0,\ldots,X_{n-1}\) be independent \(N(0,1)\) variables.  Then, for \(c\geq1\),
\begin{equation}
        \Pi_{P_n}(c)
        \geq
        \left[
        \erf\left(\frac{c-1}{\sqrt2}\right)
        \right]^n.
\end{equation}
A certified confidence radius is
\begin{equation}
        c_p=1+\sqrt2\,\erf^{-1}\left(p^{1/n}\right).
\end{equation}
Furthermore,
\begin{equation}
        \Pi_{P_n}(c)
        \leq
        \erf\left(\frac{c^n}{\sqrt2}\right),
        \qquad c\geq0.
\end{equation}
\end{corollary}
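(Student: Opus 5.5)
This corollary follows by specializing the exact monic Cauchy-certificate theorem to the Gaussian law, together with the Vieta inner certificate. First, for \(X\sim N(0,1)\), the absolute value \(|X|\) is half-normal. Its distribution function is
\begin{equation}
        H(t)=\Prob\{|X|\leq t\}=\frac{2}{\sqrt{2\pi}}\int_0^t e^{-x^2/2}\dd x=\erf\left(\frac{t}{\sqrt2}\right),
        \qquad t\geq0,
\end{equation}
which follows from the substitution \(x=\sqrt2\,u\). Substituting this \(H\) into the bound \(\Pi_{P_n}(c)\geq H(c-1)^n\) of the exact monic Cauchy-certificate theorem gives the first displayed inequality for every \(c\geq1\).

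For the confidence radius, I would observe that \(H\) is continuous and strictly increasing on \([0,\infty)\), with \(H(0)=0\) and \(H(t)\to1\). Hence the generalized inverse \(H^{-1}(u)=\inf\{t\geq0:H(t)\geq u\}\) coincides with the ordinary inverse \(\sqrt2\,\erf^{-1}(u)\) for \(u\in(0,1)\). Since \(0<p<1\) gives \(p^{1/n}\in(0,1)\), the radius
\begin{equation}
        c_p=1+H^{-1}(p^{1/n})=1+\sqrt2\,\erf^{-1}(p^{1/n})
\end{equation}
is well defined. Moreover \(c_p\geq1\), so \(c_p\) lies in the range where the theorem applies. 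It then satisfies \(\Pi_{P_n}(c_p)\geq H(c_p-1)^n=p\), which by part (iii) of the certificate-transfer principle makes it a probabilistic zero bound at confidence \(p\).

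For the upper bound I would apply the Vieta inner certificate to the monic polynomial \(P_n\), whose constant term is \(a_0=X_0\). This gives \(\Pi_{P_n}(c)\leq\Prob\{|X_0|\leq c^n\}=H(c^n)=\erf(c^n/\sqrt2)\), valid for all \(c\geq0\). The only point needing care, and the closest thing to an obstacle, is identifying the half-normal distribution function and its inverse, including the boundary behaviour of the generalized inverse. All the other steps are direct substitutions into results already proved.
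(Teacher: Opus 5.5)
Your proposal is correct and matches the paper's proof: you identify \(H(t)=\erf(t/\sqrt2)\), substitute it into the exact monic Cauchy-certificate theorem, and get the upper bound from the Vieta inner certificate with \(a_0=X_0\). You also check that the generalized inverse equals \(\sqrt2\,\erf^{-1}\) on \((0,1)\) and that \(c_p\geq1\), which the paper leaves implicit.
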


\begin{proof}
For \(X\sim N(0,1)\),
\begin{equation}
        H(t)=\Prob\{|X|\leq t\}
        =\frac{1}{\sqrt{2\pi}}\int_{-t}^{t}e^{-u^2/2}\dd u
        =\erf\left(\frac{t}{\sqrt2}\right).
\end{equation}
Substitution in the preceding theorem gives the lower certificate.  The upper bound follows from the Vieta inner certificate.
\end{proof}

\begin{theorem}[Sub-Weibull Cauchy radius]
Assume that \(X_0,\ldots,X_{n-1}\) are independent and satisfy
\begin{equation}
        \Prob\{|X_k|>t\}
        \leq A\exp\left[-\left(\frac{t}{K}\right)^\alpha\right],
        \qquad t\geq0,
\end{equation}
with \(A\geq1\), \(K>0\), and \(\alpha>0\).  Then, for every \(0<\delta<1\),
\begin{equation}
        \Prob\left\{
        \rhozero(P_n)
        \leq
        1+K\left(\log\frac{An}{\delta}\right)^{1/\alpha}
        \right\}
        \geq1-\delta.
\end{equation}
\end{theorem}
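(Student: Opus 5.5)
The plan is to combine the Cauchy outer certificate with a union bound; no independence is needed. By Cauchy's theorem, as in the proof of the exact monic Cauchy-certificate probability, \(\rhozero(P_n)\le 1+\max_{0\le k\le n-1}|X_k|\). Set
\begin{equation}
        t_\delta=K\left(\log\frac{An}{\delta}\right)^{1/\alpha}.
\end{equation}
Since \(A\ge1\) and \(0<\delta<1\), we have \(An/\delta>1\), so the logarithm is positive and \(t_\delta\) is a well-defined positive number. By the certificate-transfer principle (part (i)), it suffices to show that
\begin{equation}
        \Prob\Bigl\{\max_{0\le k\le n-1}|X_k|\le t_\delta\Bigr\}\ge 1-\delta .
\end{equation}

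The key step is a union bound over the \(n\) coefficients. The complement event satisfies
\begin{equation}
        \Prob\Bigl\{\max_k|X_k|>t_\delta\Bigr\}\le\sum_{k=0}^{n-1}\Prob\{|X_k|>t_\delta\}\le nA\exp\bigl[-(t_\delta/K)^\alpha\bigr].
\end{equation}
By the choice of \(t_\delta\), we have \((t_\delta/K)^\alpha=\log(An/\delta)\). The right-hand side therefore equals \(nA\cdot\delta/(An)=\delta\). Taking complements, and using the inclusion \(\{1+\max_k|X_k|\le 1+t_\delta\}\subseteq\{\rhozero(P_n)\le 1+t_\delta\}\), gives the claim.

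No step is a real obstacle. The only points to check are that \(t_\delta\) is well defined and that the tail hypothesis is applied at the level \(t=t_\delta\ge0\); both are covered above. Independence could alternatively give the sharper product bound \(1-(1-A e^{-(t/K)^\alpha})^n\)-type estimates via \(H(c-1)^n\). I would use the union bound instead, since it yields exactly the stated closed form and remains valid even without independence.
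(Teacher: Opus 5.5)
Your proposal is correct and matches the paper's proof: Cauchy's bound \(\rhozero(P_n)\le 1+\max_k|X_k|\), a union bound over the \(n\) coefficients, and the choice \(t=K(\log(An/\delta))^{1/\alpha}\), which makes \(nA\exp[-(t/K)^\alpha]=\delta\). Your added checks, that \(t_\delta\) is well defined and that independence is not actually used, are valid refinements of the same argument.
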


\begin{proof}
By Cauchy's theorem,
\begin{equation}
        \rhozero(P_n)\leq1+\max_{0\leq k\leq n-1}|X_k|.
\end{equation}
For \(t\geq0\), the union bound gives
\begin{align}
        \Prob\left\{\max_{0\leq k\leq n-1}|X_k|>t\right\}
        &\leq
        \sum_{k=0}^{n-1}\Prob\{|X_k|>t\} \\
        &\leq
        nA\exp\left[-\left(\frac{t}{K}\right)^\alpha\right].
\end{align}
Choose
\begin{equation}
        t=K\left(\log\frac{An}{\delta}\right)^{1/\alpha}.
\end{equation}
Then the last expression equals \(\delta\), and the result follows from the Cauchy certificate.
\end{proof}

\begin{corollary}[Sub-Gaussian radius]
If
\begin{equation}
        \Prob\{|X_k|>t\}
        \leq2\exp\left(-\frac{t^2}{2\sigma^2}\right),
        \qquad t\geq0,
\end{equation}
then
\begin{equation}
        \Prob\left\{
        \rhozero(P_n)
        \leq
        1+\sigma\sqrt{2\log\frac{2n}{\delta}}
        \right\}
        \geq1-\delta.
\end{equation}
\end{corollary}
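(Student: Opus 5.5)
This corollary is a direct specialization of the Sub-Weibull Cauchy radius theorem, so I would not repeat any analytic argument. I would just match parameters. The hypothesis
\begin{equation}
        \Prob\{|X_k|>t\}\leq 2\exp\left(-\frac{t^2}{2\sigma^2}\right)
\end{equation}
is exactly the sub-Weibull tail condition with \(A=2\geq1\) and \(\alpha=2\). The scale is chosen so that \((t/K)^2=t^2/(2\sigma^2)\), that is, \(K=\sqrt2\,\sigma>0\). Independence of \(X_0,\ldots,X_{n-1}\) is inherited from the standing assumptions of the section.

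Next I would substitute these values into the radius of the Sub-Weibull Cauchy radius theorem. For every \(0<\delta<1\) the certified radius becomes
\begin{equation}
        1+K\left(\log\frac{An}{\delta}\right)^{1/\alpha}
        =1+\sqrt2\,\sigma\left(\log\frac{2n}{\delta}\right)^{1/2}
        =1+\sigma\sqrt{2\log\frac{2n}{\delta}}.
\end{equation}
The probability lower bound \(1-\delta\) transfers verbatim. Note that \(\log(2n/\delta)>0\) because \(2n/\delta>1\), so the square root is real.

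There is no genuine obstacle here. The only point needing care is the normalization of \(K\): one must take \(K=\sqrt2\,\sigma\) rather than \(K=\sigma\), or the constant \(2\) inside the square root would be lost.

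If a self-contained check is preferred, the same result follows by rerunning the union bound directly. One bounds \(\Prob\{\max_k|X_k|>t\}\leq 2n\,e^{-t^2/(2\sigma^2)}\) and sets this equal to \(\delta\), which gives \(t=\sigma\sqrt{2\log(2n/\delta)}\). Cauchy's bound \(\rhozero(P_n)\leq 1+\max_k|X_k|\) then finishes the argument.
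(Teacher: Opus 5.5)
Your proposal is correct and matches the paper, which states this corollary without a separate proof as the direct specialization of the Sub-Weibull Cauchy radius theorem. Your parameter matching $A=2$, $\alpha=2$, $K=\sqrt{2}\,\sigma$ is exactly the intended argument, including the care you take with the normalization of $K$.
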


\begin{theorem}[Optimized scaled Cauchy envelope]
Let
\begin{equation}
        P(z)=z^n+\sum_{k=0}^{n-1}a_kz^k.
\end{equation}
For \(s>0\), define
\begin{equation}
        B_s(P)=s\left(1+\max_{0\leq k\leq n-1}|a_k|s^{k-n}\right).
\end{equation}
Then every \(B_s\) is an outer certificate.  Hence
\begin{equation}
        B_{\rm opt}(P)=\inf_{s>0}B_s(P)
\end{equation}
is an outer certificate and is the pointwise envelope of this scaled Cauchy family.  Moreover, with
\begin{equation}
        M_F=\max_{0\leq k\leq n-1}|a_k|^{1/(n-k)},
\end{equation}
one has
\begin{equation}
        B_{\rm opt}(P)\leq2M_F.
\end{equation}
\end{theorem}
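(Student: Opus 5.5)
The plan is to obtain each \(B_s\) from Cauchy's theorem by a change of scale, then pass to the infimum, and finally choose a specific \(s\) to get the bound by \(2M_F\).

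\textbf{Step 1: each \(B_s\) is an outer certificate.} Fix \(s>0\) and consider the rescaled monic polynomial
\begin{equation}
        \tilde P(w)=s^{-n}P(sw)=w^n+\sum_{k=0}^{n-1}a_ks^{k-n}w^k .
\end{equation}
Its zeros are exactly \(\zeta/s\), where \(\zeta\) runs over the zeros of \(P\). Hence
\begin{equation}
        \rhozero(\tilde P)=\rhozero(P)/s .
\end{equation}
Cauchy's theorem, in the form used in the proof of the exact monic Cauchy-certificate theorem, gives
\begin{equation}
        \rhozero(\tilde P)\le 1+\max_k|a_k|s^{k-n}.
\end{equation}
Multiplying by \(s\) yields \(\rhozero(P)\le B_s(P)\). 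This holds for every \(s>0\) simultaneously and deterministically.

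\textbf{Step 2: the envelope.} Since \(\rhozero(P)\le B_s(P)\) for all \(s\), it follows that \(\rhozero(P)\le\inf_{s>0}B_s(P)=B_{\rm opt}(P)\).

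The only point needing care is measurability of \(B_{\rm opt}\), which is required by the definition of a certificate. For fixed coefficients, \(s\mapsto B_s(P)\) is continuous on \((0,\infty)\). Therefore the infimum equals the infimum over rational \(s\), which is a countable infimum of measurable functions of the coefficients. Hence \(B_{\rm opt}\) is measurable. By construction it is the pointwise envelope of the family.

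\textbf{Step 3: the bound \(B_{\rm opt}\le 2M_F\).} If \(M_F=0\), all \(a_k=0\) and \(P(z)=z^n\). Then \(B_s=s\to0\) as \(s\to0\), so \(B_{\rm opt}=0\) and the bound holds.

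Otherwise, take \(s=M_F>0\). For each \(k\) we have \(|a_k|\le M_F^{\,n-k}\), so
\begin{equation}
        |a_k|M_F^{k-n}\le 1 .
\end{equation}
Consequently
\begin{equation}
        B_{\rm opt}(P)\le B_{M_F}(P)\le M_F(1+1)=2M_F .
\end{equation}

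\textbf{Expected obstacle.} There is essentially none. The mild subtleties are the measurability of the infimum and the degenerate case \(M_F=0\), both handled above. The key idea is the choice \(s=M_F\), which balances every coefficient term to at most one.
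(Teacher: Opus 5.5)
Your proof is correct and follows the paper's argument: rescale to \(s^{-n}P(sw)\), apply Cauchy's bound, pass to the infimum over \(s\), and choose \(s=M_F\) to get \(B_{\rm opt}(P)\leq 2M_F\). You also supply two points the paper leaves implicit or only gestures at. One is measurability of \(B_{\rm opt}\), via continuity in \(s\) and an infimum over rational \(s\). The other is the direct treatment of \(M_F=0\), where \(B_s=s\to0\).
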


\begin{proof}
Fix \(s>0\) and set
\begin{equation}
        Q_s(w)=s^{-n}P(sw)
        =w^n+\sum_{k=0}^{n-1}a_ks^{k-n}w^k.
\end{equation}
Cauchy's theorem gives
\begin{equation}
        |w|\leq1+\max_{0\leq k\leq n-1}|a_k|s^{k-n}
\end{equation}
for every zero \(w\) of \(Q_s\).  Since zeros of \(P\) are \(z=sw\), every zero \(z\) of \(P\) satisfies \(|z|\leq B_s(P)\).  Taking the infimum over \(s>0\) preserves the outer-certificate inequality.

For the Fujiwara-type bound, observe that \(|a_k|\leq M_F^{n-k}\).  Choosing \(s=M_F\) gives \(|a_k|M_F^{k-n}\leq1\) and hence \(B_{\rm opt}(P)\leq2M_F\).  The case \(M_F=0\) follows by a limiting argument, since then \(P(z)=z^n\).
\end{proof}

\begin{corollary}[Fujiwara-type probabilistic certificate]
Let
\begin{equation}
        P_n(z)=z^n+\sum_{k=0}^{n-1}X_kz^k,
\end{equation}
where \(X_0,\ldots,X_{n-1}\) are independent copies of \(X\), and put \(H(t)=\Prob\{|X|\leq t\}\).  Then, for \(c>0\),
\begin{equation}
        \Pi_{P_n}(c)
        \geq
        \prod_{j=1}^{n}H\left(\left(\frac{c}{2}\right)^j\right).
\end{equation}
For standard Gaussian coefficients,
\begin{equation}
        \Pi_{P_n}(c)
        \geq
        \prod_{j=1}^{n}
        \erf\left(\frac{(c/2)^j}{\sqrt2}\right).
\end{equation}
\end{corollary}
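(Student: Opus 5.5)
The plan is to pass the Fujiwara-type outer certificate \(2M_F\) through the certificate-transfer principle, using the last assertion of the optimized scaled Cauchy envelope theorem. That theorem gives \(\rhozero(P_n)\leq B_{\rm opt}(P_n)\leq 2M_F\), where
\begin{equation*}
        M_F=\max_{0\leq k\leq n-1}|X_k|^{1/(n-k)} .
\end{equation*}
So \(2M_F\) is itself a measurable outer certificate, being a maximum of finitely many continuous functions of the coefficients. Part (i) of the transfer principle then gives
\begin{equation*}
        \Pi_{P_n}(c)\geq \Prob\{2M_F\leq c\}
\end{equation*}
for every \(c>0\).

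Next I would compute this probability exactly. The event \(\{M_F\leq c/2\}\) is the intersection over \(0\leq k\leq n-1\) of the events \(\{|X_k|^{1/(n-k)}\leq c/2\}\). Since \(c/2>0\) and \(t\mapsto t^{n-k}\) is increasing on \([0,\infty)\), each of these equals \(\{|X_k|\leq (c/2)^{n-k}\}\). Independence of \(X_0,\ldots,X_{n-1}\) turns the probability of the intersection into the product
\begin{equation*}
        \prod_{k=0}^{n-1}H\bigl((c/2)^{n-k}\bigr).
\end{equation*}
Reindexing with \(j=n-k\), which runs from \(1\) to \(n\), gives the stated product. The measurability of \(2M_F\) is automatic, and the degenerate case \(M_F=0\) is already handled in the envelope theorem, so no separate argument is needed there.

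For the Gaussian specialization I would substitute \(H(t)=\erf(t/\sqrt2)\), exactly as computed in the proof of the Gaussian monic Cauchy certificate.

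I do not expect a genuine obstacle. The only points needing care are that the monotone equivalence \(|X_k|^{1/(n-k)}\leq c/2 \iff |X_k|\leq (c/2)^{n-k}\) uses \(c>0\), and that the product form uses only independence, not identical distribution beyond the common \(H\).
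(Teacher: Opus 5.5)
Your proposal is correct and matches the paper's proof essentially step for step: the paper also bounds \(\rhozero(P_n)\) by \(2\max_{0\leq k\leq n-1}|X_k|^{1/(n-k)}\) via the envelope theorem, reduces \(\{\rhozero(P_n)\leq c\}\) to the events \(\{|X_k|\leq (c/2)^{n-k}\}\), multiplies using independence, reindexes with \(j=n-k\), and substitutes \(H(t)=\erf(t/\sqrt2)\). Your explicit appeal to part (i) of the transfer principle and to the monotone equivalence only spells out what the paper compresses into the phrase ``certified by''.
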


\begin{proof}
The optimized envelope gives
\begin{equation}
        \rhozero(P_n)
        \leq2\max_{0\leq k\leq n-1}|X_k|^{1/(n-k)}.
\end{equation}
Thus \(\rhozero(P_n)\leq c\) is certified by
\begin{equation}
        |X_k|\leq\left(\frac{c}{2}\right)^{n-k},
        \qquad 0\leq k\leq n-1.
\end{equation}
Independence yields
\begin{equation}
        \Prob\{\rhozero(P_n)\leq c\}
        \geq
        \prod_{k=0}^{n-1}H\left(\left(\frac{c}{2}\right)^{n-k}\right)
        =
        \prod_{j=1}^{n}H\left(\left(\frac{c}{2}\right)^j\right).
\end{equation}
The Gaussian formula follows from \(H(t)=\erf(t/\sqrt2)\).
\end{proof}

\section{The main theorem: dependent Gaussian ratio certificates}

We now turn to the main contribution.  Let
\begin{equation}
        Q_n(z)=\sum_{k=0}^{n}X_kz^k,
        \qquad X_0,\ldots,X_n\stackrel{\rm i.i.d.}{\sim}N(0,1).
\end{equation}
Since \(X_n\neq0\) almost surely, Cauchy's theorem gives
\begin{equation}
        \rhozero(Q_n)
        \leq
        1+\max_{0\leq k\leq n-1}\left|\frac{X_k}{X_n}\right|.
\end{equation}
For each fixed \(k<n\), the ratio \(X_k/X_n\) has the standard Cauchy law.  The vector of ratios has a shared denominator, so the maximum is governed by a common-denominator dependence structure.  This section keeps that dependence exactly.

\begin{theorem}[Dependence-aware Gaussian ratio certificate]
Define
\begin{equation}
        I_n(r)=
        \sqrt{\frac{2}{\pi}}
        \int_0^\infty e^{-x^2/2}
        \left[
        \erf\left(\frac{rx}{\sqrt2}\right)
        \right]^n
        \dd x,
        \qquad r\geq0.
\end{equation}
Then, for \(c\geq1\),
\begin{equation}
        \Pi_{Q_n}(c)\geq I_n(c-1).
\end{equation}
Equivalently,
\begin{equation}
        I_n(r)
        =
        \Prob\left\{
        \max_{0\leq k\leq n-1}\left|\frac{X_k}{X_n}\right|\leq r
        \right\}.
\end{equation}
\end{theorem}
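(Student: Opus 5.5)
The plan is to prove the identity for \(I_n(r)\) first and then obtain the containment bound from the certificate-transfer principle. The key idea is to condition on the common denominator \(X_n\). This turns the dependent ratios into conditionally independent variables, after which the half-normal formula \(H(t)=\erf(t/\sqrt2)\) from the Gaussian monic corollary can be reused.

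First, since \(X_n\neq0\) almost surely, the event
\begin{equation}
        \Bigl\{\max_{0\leq k\leq n-1}|X_k/X_n|\leq r\Bigr\}
\end{equation}
coincides up to a null set with the event that \(|X_k|\leq r|X_n|\) for all \(0\leq k\leq n-1\). Fix the value \(X_n=x\). Because \(X_0,\ldots,X_{n-1}\) are independent of \(X_n\) and of each other, the conditional probability of this event is
\begin{equation}
        \prod_{k=0}^{n-1}\Prob\{|X_k|\leq r|x|\}=\left[\erf\left(\frac{r|x|}{\sqrt2}\right)\right]^n .
\end{equation}
This step can be made fully rigorous by Fubini on the product measure of \((X_0,\ldots,X_{n-1})\) and \(X_n\), which avoids any appeal to regular conditional probabilities.

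Next, integrate this expression against the density \((2\pi)^{-1/2}e^{-x^2/2}\) of \(X_n\) over \(\mathbb R\). The integrand depends only on \(|x|\), so the integral over \(\mathbb R\) equals twice the integral over \((0,\infty)\). The prefactor then becomes \(2/\sqrt{2\pi}=\sqrt{2/\pi}\), which is exactly the displayed formula for \(I_n(r)\). A sanity check is available at \(n=1\): the formula should reduce to the standard Cauchy distribution function of \(|X_0/X_1|\), namely \((2/\pi)\arctan r\). I do not plan to verify this in the proof.

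Finally, the containment bound follows from Cauchy's theorem, \(\rhozero(Q_n)\leq1+\max_k|X_k/X_n|\). This shows that \(B=1+\max_k|X_k/X_n|\) is an outer certificate, so part (i) of the certificate-transfer principle gives \(\Pi_{Q_n}(c)\geq\Prob\{B\leq c\}=I_n(c-1)\) for \(c\geq1\). There is no real obstacle in this argument. The only point needing care is the justification of the conditioning step, that is, the Fubini/independence argument and the null set \(\{X_n=0\}\). It is also important to stress that one must not multiply marginal Cauchy probabilities, since the ratios are dependent.
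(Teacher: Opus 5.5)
Your proposal is correct and follows essentially the same route as the paper: you condition on the common denominator $X_n=x$ to get $\bigl[\erf(r|x|/\sqrt2)\bigr]^n$, integrate against the normal density using symmetry to obtain $I_n(r)$, and then apply Cauchy's bound together with the certificate-transfer principle. Your Fubini justification and your handling of the null set $\{X_n=0\}$ make explicit the points the paper leaves implicit.
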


\begin{proof}
Condition on \(X_n=x\).  Since \(X_0,\ldots,X_{n-1}\) are independent of \(X_n\),
\begin{align}
        &\Prob\left\{
        \max_{0\leq k\leq n-1}\left|\frac{X_k}{X_n}\right|\leq r
        \;\middle|\; X_n=x
        \right\} \\
        &\qquad=
        \Prob\{|X_k|\leq r|x|,
        \ 0\leq k\leq n-1\} \\
        &\qquad=
        \left[
        \erf\left(\frac{r|x|}{\sqrt2}\right)
        \right]^n.
\end{align}
Integrating against the standard normal density gives
\begin{align}
        &\Prob\left\{
        \max_{0\leq k\leq n-1}\left|\frac{X_k}{X_n}\right|\leq r
        \right\} \\
        &\qquad=
        \frac{1}{\sqrt{2\pi}}
        \int_{-\infty}^{\infty}e^{-x^2/2}
        \left[
        \erf\left(\frac{r|x|}{\sqrt2}\right)
        \right]^n
        \dd x \\
        &\qquad=
        I_n(r).
\end{align}
The Cauchy outer certificate for \(Q_n\) gives \(\Pi_{Q_n}(c)\geq I_n(c-1)\).
\end{proof}

\begin{corollary}[Certified confidence radius]
Let
\begin{equation}
        I_n^{-1}(p)=\inf\{r\geq0:I_n(r)\geq p\},
        \qquad 0<p<1.
\end{equation}
Then
\begin{equation}
        c_{n,p}=1+I_n^{-1}(p)
\end{equation}
is a probabilistic zero bound for \(Q_n\) at confidence \(p\).
\end{corollary}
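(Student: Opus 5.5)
The plan is to reduce the corollary to the preceding theorem (the dependence-aware Gaussian ratio certificate) together with part (iii) of the certificate-transfer principle. The only extra work is to check that the generalized inverse \(I_n^{-1}(p)\) is finite and that \(I_n\) actually attains level \(p\) at \(r=I_n^{-1}(p)\). An infimum need not be attained, so this is where monotonicity and continuity of \(I_n\) enter.

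First I would record the basic properties of \(I_n\) on \([0,\infty)\), using its probabilistic representation
\[
I_n(r)=\Prob\Bigl\{\max_{0\le k\le n-1}|X_k/X_n|\le r\Bigr\}.
\]
This is the distribution function of the random variable \(M=\max_k|X_k/X_n|\). It is therefore nondecreasing, right-continuous, and satisfies \(I_n(r)\to1\) as \(r\to\infty\), since \(M\) is almost surely finite because \(X_n\neq0\) almost surely. Alternatively, all three properties follow from the integral formula by monotone convergence, because \(\erf(rx/\sqrt2)\uparrow1\) for each \(x>0\). In fact \(I_n\) is continuous, by dominated convergence with dominating function \(e^{-x^2/2}\). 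Consequently, for \(0<p<1\), the set \(\{r\ge0:I_n(r)\ge p\}\) is nonempty, so \(r_p:=I_n^{-1}(p)<\infty\).

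Next I would show \(I_n(r_p)\ge p\). There is a sequence \(r_j\downarrow r_p\) with \(I_n(r_j)\ge p\), and right-continuity gives \(I_n(r_p)=\lim_j I_n(r_j)\ge p\). Setting \(c=c_{n,p}=1+r_p\ge1\), the preceding theorem yields
\[
\Pi_{Q_n}(c_{n,p})\ge I_n(c_{n,p}-1)=I_n(r_p)\ge p,
\]
which is exactly the definition of a probabilistic zero bound at confidence \(p\). Equivalently, I would invoke part (iii) of the certificate-transfer principle with the outer certificate \(B(Q_n)=1+M\), for which \(F_B(c)=I_n(c-1)\).

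The only step needing care is the attainment of the infimum, which right-continuity of a distribution function handles. No other obstacles are expected.
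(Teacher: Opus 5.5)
Your proposal is correct and takes the same route as the paper. The paper states this corollary without proof, as an immediate consequence of the dependence-aware ratio theorem (equivalently, part (iii) of the certificate-transfer principle) evaluated at \(c_{n,p}=1+I_n^{-1}(p)\). Your check that \(I_n^{-1}(p)<\infty\) and that \(I_n(I_n^{-1}(p))\ge p\), by right-continuity of the distribution function of \(\max_k|X_k/X_n|\), supplies the one detail the paper leaves implicit.
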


\begin{proposition}[Union-bound form]
For \(r\geq0\),
\begin{equation}
        I_n(r)
        \geq
        1-n\left(1-\frac{2}{\pi}\arctan r\right).
\end{equation}
Consequently, for \(0<\delta<1\),
\begin{equation}
        \Prob\left\{
        \rhozero(Q_n)
        \leq
        1+\cot\left(\frac{\pi\delta}{2n}\right)
        \right\}
        \geq1-\delta.
\end{equation}
\end{proposition}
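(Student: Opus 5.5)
The plan is to start from the probabilistic identity in the dependence-aware theorem,
\begin{equation}
        I_n(r)=\Prob\left\{\max_{0\leq k\leq n-1}\left|\frac{X_k}{X_n}\right|\leq r\right\},
\end{equation}
and apply a union bound to the complementary event. We will not use the integral representation, since the union bound only needs the marginal laws. Each ratio \(X_k/X_n\) with \(k<n\) is standard Cauchy, so for \(r\geq0\),
\begin{equation}
        \Prob\left\{\left|\frac{X_k}{X_n}\right|>r\right\}=1-\frac{2}{\pi}\arctan r.
\end{equation}
This is the classical formula \(\Prob\{|C|\leq r\}=\frac{2}{\pi}\arctan r\) for a standard Cauchy variable \(C\). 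Subadditivity over the \(n\) indices \(0\leq k\leq n-1\) then gives
\begin{equation}
        1-I_n(r)\leq n\left(1-\frac{2}{\pi}\arctan r\right),
\end{equation}
which is the first claim. The shared denominator plays no role here, because the union bound requires no independence.

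For the confidence statement, fix \(0<\delta<1\) and choose \(r\) so that the right-hand side equals \(\delta\). This means
\begin{equation}
        \arctan r=\frac{\pi}{2}\left(1-\frac{\delta}{n}\right).
\end{equation}
Using \(\tan(\pi/2-\theta)=\cot\theta\), we get \(r=\cot\bigl(\pi\delta/(2n)\bigr)\). Since \(0<\pi\delta/(2n)<\pi/2\), this \(r\) is positive and finite. With this choice, \(I_n(r)\geq1-\delta\).

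Finally, take \(c=1+r\geq1\) in the dependence-aware Gaussian ratio certificate, which gives \(\Pi_{Q_n}(c)\geq I_n(c-1)=I_n(r)\geq1-\delta\). Equivalently, one can apply the certificate-transfer principle directly to the Cauchy outer certificate \(1+\max_k|X_k/X_n|\). The only point needing care is the verification of the Cauchy marginal law. It follows from the same conditioning argument used for \(I_n\) with \(n=1\): \(I_1(r)\) equals \(\frac{2}{\pi}\arctan r\), which one checks by differentiating in \(r\) and evaluating a Gaussian integral. There is no real obstacle beyond this.
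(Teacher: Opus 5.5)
Your proposal is correct and follows the paper's proof: a union bound over the $n$ ratios $X_k/X_n$ using their standard Cauchy marginals, then the choice $r=\cot(\pi\delta/(2n))$ combined with the Cauchy outer certificate at $c=1+r$. Your check that $I_1(r)=\tfrac{2}{\pi}\arctan r$, by differentiating in $r$ and using $I_1(0)=0$, is a valid supplement that verifies the Cauchy marginal law the paper takes as standard.
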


\begin{proof}
The union bound gives
\begin{align}
        \Prob\left\{
        \max_{0\leq k\leq n-1}\left|\frac{X_k}{X_n}\right|>r
        \right\}
        &\leq
        \sum_{k=0}^{n-1}
        \Prob\left\{\left|\frac{X_k}{X_n}\right|>r\right\}.
\end{align}
For each \(k<n\), \(X_k/X_n\) is standard Cauchy, hence
\begin{equation}
        \Prob\left\{\left|\frac{X_k}{X_n}\right|\leq r\right\}
        =\frac{2}{\pi}\arctan r.
\end{equation}
This proves the displayed lower bound for \(I_n(r)\).  With
\begin{equation}
        r=\cot\left(\frac{\pi\delta}{2n}\right),
\end{equation}
one has
\begin{equation}
        \arctan r=\frac{\pi}{2}-\frac{\pi\delta}{2n},
\end{equation}
and the failure probability is at most \(\delta\).
\end{proof}

\begin{proposition}[Vieta upper certificate for general Gaussian polynomials]
For \(Q_n(z)=\sum_{k=0}^{n}X_kz^k\) with independent standard normal coefficients,
\begin{equation}
        \Pi_{Q_n}(c)
        \leq
        \frac{2}{\pi}\arctan(c^n),
        \qquad c\geq0.
\end{equation}
\end{proposition}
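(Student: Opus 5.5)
We will reduce the statement to the Vieta inner certificate proved earlier, applied to the monic normalization of \(Q_n\). Since \(X_n\neq0\) almost surely, \(Q_n\) and the monic polynomial
\begin{equation}
        \widetilde Q_n(z)=\frac{Q_n(z)}{X_n}=z^n+\sum_{k=0}^{n-1}\frac{X_k}{X_n}z^k
\end{equation}
have the same zeros, so \(\rhozero(Q_n)=\rhozero(\widetilde Q_n)\) almost surely. The constant coefficient of \(\widetilde Q_n\) is \(a_0=X_0/X_n\). The Vieta inner certificate therefore gives \(|X_0/X_n|^{1/n}\leq\rhozero(Q_n)\), that is, \(L(Q_n)=|X_0/X_n|^{1/n}\) is an inner zero certificate.

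Next we apply part (ii) of the certificate-transfer principle. It yields
\begin{equation}
        \Pi_{Q_n}(c)\leq\Prob\{|X_0/X_n|^{1/n}\leq c\}=\Prob\{|X_0/X_n|\leq c^n\}.
\end{equation}
Here \(c\geq0\), so the \(n\)th-root map is monotone and the two events coincide.

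Finally, we evaluate this probability. We use the fact already invoked in the union-bound proposition: \(X_0/X_n\) is standard Cauchy, being a ratio of independent standard normals. Its absolute value therefore satisfies
\begin{equation}
        \Prob\{|X_0/X_n|\leq t\}=\frac{2}{\pi}\arctan t .
\end{equation}
Substituting \(t=c^n\) gives the claim. The case \(c=0\) is trivial, since both sides vanish almost surely: \(X_0\neq0\) almost surely, so \(0\) is not a zero and \(\rhozero>0\).

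There is no real obstacle here. The only points requiring care are the almost-sure normalization by \(X_n\) and checking that the Vieta inequality is applied to the monic polynomial \(\widetilde Q_n\) rather than to \(Q_n\) itself. If a self-contained justification of the Cauchy law is wanted, it can be obtained by the same conditioning argument as in the dependence-aware theorem with \(n=1\): \(I_1(r)=\sqrt{2/\pi}\int_0^\infty e^{-x^2/2}\erf(rx/\sqrt2)\,dx=\tfrac{2}{\pi}\arctan r\).
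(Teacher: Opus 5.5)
Your proof is correct and follows the paper's argument. Vieta's formula gives \(|X_0/X_n|^{1/n}\le\rhozero(Q_n)\) (the paper writes \(|\zeta_1\cdots\zeta_n|=|X_0/X_n|\) directly rather than passing through the monic normalization, which amounts to the same thing), the resulting event inclusion bounds \(\Pi_{Q_n}(c)\) by \(\Prob\{|X_0/X_n|\le c^n\}\), and the standard Cauchy law of \(X_0/X_n\) evaluates this probability; your separate treatment of \(c=0\) is harmless but unnecessary, since the general computation already covers it.
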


\begin{proof}
Let \(\zeta_1,\ldots,\zeta_n\) be the zeros of \(Q_n\).  Vieta's formula gives
\begin{equation}
        |\zeta_1\cdots\zeta_n|
        =\left|\frac{X_0}{X_n}\right|.
\end{equation}
Since \(|\zeta_j|\leq\rhozero(Q_n)\),
\begin{equation}
        \left|\frac{X_0}{X_n}\right|^{1/n}
        \leq \rhozero(Q_n).
\end{equation}
Thus
\begin{equation}
        \{\rhozero(Q_n)\leq c\}
        \subseteq
        \left\{\left|\frac{X_0}{X_n}\right|\leq c^n\right\}.
\end{equation}
The ratio \(X_0/X_n\) is standard Cauchy, which gives
\begin{equation}
        \Prob\left\{\left|\frac{X_0}{X_n}\right|\leq c^n\right\}
        =\frac{2}{\pi}\arctan(c^n).
\end{equation}
\end{proof}

\begin{remark}
The preceding theorem is the point at which the present paper departs most clearly from a direct Cauchy--Gaussian repetition.  The marginal law of each ratio is Cauchy, while the certificate involves the maximum of a dependent ratio vector.  The integral \(I_n\) is the exact certificate probability for that dependent vector.
\end{remark}

\section{Sharp asymptotics and comparison with independent Cauchy maxima}

This section solves the inverse-confidence problem for \(I_n\).  Let
\begin{equation}
        H(t)=\Prob\{|N(0,1)|\leq t\}
        =\erf\left(\frac{t}{\sqrt2}\right),
        \qquad
        \overline H(t)=1-H(t).
\end{equation}
Let \(b_n\) be the half-normal extreme-value quantile
\begin{equation}
        b_n=H^{-1}\left(1-\frac1n\right)
        =\sqrt2\,\erf^{-1}\left(1-\frac1n\right),
\end{equation}
and let \(a_p\) be the upper-tail half-normal quantile
\begin{equation}
        a_p=H^{-1}(1-p)
        =\sqrt2\,\erf^{-1}(1-p),
        \qquad 0<p<1.
\end{equation}
Thus \(\Prob\{|N(0,1)|\geq a_p\}=p\).

\begin{theorem}[Sharp inverse asymptotics for the Gaussian ratio certificate]
For fixed \(0<p<1\),
\begin{equation}
        I_n^{-1}(p)=\frac{b_n}{a_p}\bigl(1+o(1)\bigr),
        \qquad n\to\infty.
\end{equation}
Equivalently,
\begin{equation}
        I_n^{-1}(p)
        =
        \frac{1}{a_p}
        \left[
        \sqrt{2\log n}
        -
        \frac{\log\log n+\log\pi}{2\sqrt{2\log n}}
        +
        o\left(\frac1{\sqrt{\log n}}\right)
        \right].
\end{equation}
\end{theorem}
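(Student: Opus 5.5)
The plan is to use the probabilistic representation from the dependence-aware theorem,
\begin{equation*}
        I_n(r)=\E\bigl[H(r|X_n|)^n\bigr],
\end{equation*}
where \(|X_n|\) is half-normal and independent of the other coefficients. For fixed \(y>0\), the standard extreme-value estimate for the half-normal law will show that \(H(\lambda b_n)^n\to 0\) when \(\lambda<1\) and \(H(\lambda b_n)^n\to 1\) when \(\lambda>1\). To get this, I would use the Mills-ratio asymptotic
\begin{equation*}
        \overline H(t)\sim \sqrt{2/\pi}\,t^{-1}e^{-t^2/2}
\end{equation*}
together with \(n\overline H(b_n)=1\). These give \(n\overline H(\lambda b_n)\approx n^{1-\lambda^2}\cdot(\text{polylog})\), which tends to \(\infty\) or to \(0\) according as \(\lambda<1\) or \(\lambda>1\). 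It follows that \(H(\lambda b_n)^n=\exp(-n\overline H(\lambda b_n)(1+o(1)))\) tends to \(0\) or \(1\), respectively.

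Next, fix \(\theta>0\) and set \(r=\theta b_n\). Pointwise in \(y=|X_n|\), the function \(H(\theta b_n y)^n\) tends to \(\mathbf 1\{\theta y>1\}\) for \(y\ne 1/\theta\). The integrands are bounded by \(1\), so dominated convergence gives
\begin{equation*}
        I_n(\theta b_n)\to \Prob\{|X_n|>1/\theta\}=\overline H(1/\theta).
\end{equation*}
This limit is continuous and strictly increasing in \(\theta\). Its value equals \(p\) exactly when \(1/\theta=H^{-1}(1-p)=a_p\), that is, when \(\theta=1/a_p\).

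To invert, I use that \(I_n\) is nondecreasing in \(r\). Take \(\theta_\pm=(1\pm\varepsilon)/a_p\). Strict monotonicity of the limit gives
\begin{equation*}
        \lim I_n(\theta_-b_n)<p<\lim I_n(\theta_+b_n).
\end{equation*}
Hence for all large \(n\), \(\theta_-b_n\le I_n^{-1}(p)\le\theta_+b_n\). Since \(\varepsilon\) is arbitrary, this yields \(I_n^{-1}(p)=(b_n/a_p)(1+o(1))\).

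For the equivalent second form, I need the classical expansion of \(b_n\). I would solve \(\sqrt{2/\pi}\,b^{-1}e^{-b^2/2}(1+O(b^{-2}))=1/n\), which gives
\begin{equation*}
        b_n^2=2\log n-\log\log n-\log\pi+o(1),
\end{equation*}
and therefore
\begin{equation*}
        b_n=\sqrt{2\log n}-\frac{\log\log n+\log\pi}{2\sqrt{2\log n}}+o\bigl((\log n)^{-1/2}\bigr).
\end{equation*}
There is a subtlety in passing to the displayed second form. Multiplying \(b_n/a_p\) by \(1+o(1)\) only controls the error to \(o(\sqrt{\log n})\), not to \(o(1/\sqrt{\log n})\). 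To make the second form literally true, I must sharpen the first step to a relative error of \(o(1/\log n)\).

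\textbf{Main obstacle: that sharpening.} I would write \(r=b_n(1+\eta)/a_p\) and study \(g_n(y)=H(ry)^n\) directly. The transition region of \(g_n\), in the variable \(y\), has relative width of order \(1/b_n^2\asymp 1/\log n\), because \(n\overline H(b_n(1+u))\approx e^{-b_n^2u}\). So
\begin{equation*}
        I_n(r)=\overline H\bigl(a_p/(1+\eta)\bigr)+O(1/\log n).
\end{equation*}
Here the error comes from averaging a Gumbel-type smoothing of width \(O(1/b_n^2)\) against the bounded half-normal density near \(a_p\). Solving \(I_n(r)=p\) then gives \(\eta=O(1/\log n)\). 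This produces an additive error \(O(1/\sqrt{\log n})\) in \(I_n^{-1}(p)\), which still falls short of \(o(1/\sqrt{\log n})\). To close the gap, I would make the Gumbel correction explicit. It shifts the location by a constant multiple of \(1/b_n^2\), which contributes an \(O(1/\sqrt{\log n})\) constant term. If that shift does not vanish, the displayed second-order expansion holds only up to \(O(1/\sqrt{\log n})\), and I would check this carefully, possibly weakening the remainder to \(O\) rather than \(o\).
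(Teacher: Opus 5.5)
Your argument for the first form is essentially the paper's. The paper writes $I_n(r)=\Prob\{M_n\le rT\}$, with $M_n=\max_{k<n}|X_k|$ and $T=|X_n|$ independent. It gets $M_n/b_n\to1$ in probability from the Mills ratio and $n\overline H(b_n)=1$, deduces $I_n(b_n/a)\to\overline H(a)$ from continuity of the law of $T$, and sandwiches $I_n^{-1}(p)$ between $(1\pm\eta)b_n/a_p$ by monotonicity. Your conditioning on $|X_n|$ with dominated convergence is the same step seen from the other variable.

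Your objection to the second form is correct, and it is exactly where the paper's proof fails. The paper obtains the second display by substituting the expansion of $b_n$ into $(b_n/a_p)(1+o(1))$, which only controls the error to $o(\sqrt{\log n})$.

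Your hedge can also be settled: the Gumbel shift does not vanish, so the second display is false as written. Put $G_n=b_n(M_n-b_n)$. Then $G_n$ converges in law to a standard Gumbel variable and $\E G_n\to\gamma$ (Euler's constant). This holds because $\Prob\{G_n>x\}\le n\overline H(b_n+x/b_n)\le e^{-x}$ for $x\ge0$, and the lower tail is superexponentially small. The variable $T$ has density $h(t)=\sqrt{2/\pi}\,e^{-t^2/2}$ with $|h'|\le1$. A second-order Taylor expansion therefore gives, uniformly for $\theta$ near $1/a_p$,
\[
I_n(\theta b_n)=\E\,\overline H\Bigl(\frac{1+G_n/b_n^2}{\theta}\Bigr)
=\overline H(1/\theta)-\frac{\gamma\,h(1/\theta)}{\theta\,b_n^2}+o\bigl(b_n^{-2}\bigr).
\]
Solve $I_n(\theta b_n)=p$ with $\theta=(1+\eta)/a_p$ and $\eta\to0$ (the latter is your first form). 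This gives $\eta=\gamma/b_n^2+o(b_n^{-2})$, hence
\[
I_n^{-1}(p)=\frac{1}{a_p}\Bigl(b_n+\frac{\gamma}{b_n}+o\bigl(b_n^{-1}\bigr)\Bigr)
=\frac{1}{a_p}\Bigl[\sqrt{2\log n}-\frac{\log\log n+\log\pi-2\gamma}{2\sqrt{2\log n}}+o\Bigl(\frac{1}{\sqrt{\log n}}\Bigr)\Bigr].
\]
The stated display omits the term $\gamma/(a_p\sqrt{2\log n})$, so it cannot be proved. What is true is the first form, the second display with $o$ replaced by $O$, or the corrected expansion above. You should resolve your proposal's final step accordingly rather than leave it conditional.
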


\begin{proof}
Let
\begin{equation}
        M_n=\max_{0\leq k\leq n-1}|X_k|,
        \qquad
        T=|X_n|.
\end{equation}
Then \(M_n\) and \(T\) are independent, and the integral representation gives
\begin{equation}
        I_n(r)=\Prob\{M_n\leq rT\}.
\end{equation}
We first show
\begin{equation}
        \frac{M_n}{b_n}\longrightarrow1
        \qquad \text{in probability}.
\end{equation}
Since \(\Prob\{M_n\leq t\}=H(t)^n\) and \(n\overline H(b_n)=1\), the Gaussian Mills ratio implies, for every fixed \(\varepsilon\in(0,1)\),
\begin{equation}
        n\overline H((1+\varepsilon)b_n)\longrightarrow0,
        \qquad
        n\overline H((1-\varepsilon)b_n)\longrightarrow\infty.
\end{equation}
Hence
\begin{equation}
        \Prob\{M_n\leq(1+\varepsilon)b_n\}\longrightarrow1,
        \qquad
        \Prob\{M_n\leq(1-\varepsilon)b_n\}\longrightarrow0.
\end{equation}
This proves \(M_n/b_n\to1\) in probability.

Fix \(a>0\).  Since \(T\) has a continuous distribution,
\begin{align}
        I_n\left(\frac{b_n}{a}\right)
        &=
        \Prob\left\{\frac{M_n}{b_n}\leq\frac{T}{a}\right\} \\
        &\longrightarrow
        \Prob\left\{1\leq\frac{T}{a}\right\}
        =\Prob\{T\geq a\}
        =\overline H(a).
\end{align}
Set \(a=a_p\), so \(\overline H(a_p)=p\).  For fixed \(\eta\in(0,1)\),
\begin{equation}
        I_n\left(\frac{(1-\eta)b_n}{a_p}\right)
        \longrightarrow
        \overline H\left(\frac{a_p}{1-\eta}\right)<p,
\end{equation}
and
\begin{equation}
        I_n\left(\frac{(1+\eta)b_n}{a_p}\right)
        \longrightarrow
        \overline H\left(\frac{a_p}{1+\eta}\right)>p.
\end{equation}
Monotonicity of \(I_n\) gives
\begin{equation}
        \frac{(1-\eta)b_n}{a_p}
        \leq
        I_n^{-1}(p)
        \leq
        \frac{(1+\eta)b_n}{a_p}
\end{equation}
for all sufficiently large \(n\).  Letting \(\eta\downarrow0\) proves
\begin{equation}
        I_n^{-1}(p)=\frac{b_n}{a_p}\bigl(1+o(1)\bigr).
\end{equation}
Finally,
\begin{equation}
        b_n=H^{-1}\left(1-\frac1n\right)
        =\Phi^{-1}\left(1-\frac{1}{2n}\right),
\end{equation}
and the standard Gaussian quantile expansion gives
\begin{equation}
        b_n=
        \sqrt{2\log n}
        -
        \frac{\log\log n+\log\pi}{2\sqrt{2\log n}}
        +
        o\left(\frac1{\sqrt{\log n}}\right).
\end{equation}
Substitution completes the proof.
\end{proof}

\begin{corollary}[Dependence advantage over independent Cauchy maxima]
For fixed \(0<p<1\), the dependence-aware Gaussian ratio certificate satisfies
\begin{equation}
        I_n^{-1}(p)\asymp\sqrt{\log n}.
\end{equation}
If \(Y_1,\ldots,Y_n\) are independent standard Cauchy variables, then
\begin{equation}
        \inf\left\{r:
        \Prob\left(\max_{1\leq k\leq n}|Y_k|\leq r\right)\geq p
        \right\}
        =
        \tan\left(\frac{\pi}{2}p^{1/n}\right)
        \sim
        \frac{2n}{\pi(-\log p)}.
\end{equation}
\end{corollary}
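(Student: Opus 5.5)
The corollary has two independent parts, and I will prove them separately.

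\emph{The dependence-aware scale.} This is a direct consequence of the sharp inverse asymptotics theorem just proved. For fixed \(p\), the quantity \(a_p>0\) is a constant. The theorem gives
\[
I_n^{-1}(p)=\frac{b_n}{a_p}\bigl(1+o(1)\bigr),
\qquad
b_n=\sqrt{2\log n}\,\bigl(1+o(1)\bigr).
\]
Hence \(I_n^{-1}(p)\sim\sqrt{2\log n}/a_p\). In particular, for \(n\) large the ratio \(I_n^{-1}(p)/\sqrt{\log n}\) lies between two positive constants. Finitely many small \(n\) do not affect the meaning of \(\asymp\), which is understood as \(n\to\infty\). Nothing beyond citing the theorem is needed here.

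\emph{The independent Cauchy model, exact formula.} By independence and the standard Cauchy law,
\[
\Prob\Bigl\{\max_{1\leq k\leq n}|Y_k|\leq r\Bigr\}=\Bigl(\frac{2}{\pi}\arctan r\Bigr)^n .
\]
This is continuous and strictly increasing from \(0\) to \(1\) on \([0,\infty)\), so the infimum is attained at the unique \(r\) with \(\frac{2}{\pi}\arctan r=p^{1/n}\), that is, \(r=\tan\bigl(\frac{\pi}{2}p^{1/n}\bigr)\).

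\emph{The independent Cauchy model, asymptotics.} Write \(p^{1/n}=e^{\log p/n}=1-\varepsilon_n\). Then
\[
\varepsilon_n=\frac{-\log p}{n}\bigl(1+O(1/n)\bigr)\to0^+ .
\]
Using \(\tan\bigl(\frac{\pi}{2}-x\bigr)=\cot x\sim 1/x\) as \(x\to0^+\), with \(x=\frac{\pi}{2}\varepsilon_n\), we get
\[
\tan\Bigl(\frac{\pi}{2}p^{1/n}\Bigr)\sim\frac{2}{\pi\varepsilon_n}\sim\frac{2n}{\pi(-\log p)} .
\]

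The only point requiring a little care is this last expansion: one must check that \(\varepsilon_n\sim -\log p/n\) and that the relative error passes through \(\cot x\sim1/x\). Both are elementary. There is no substantial obstacle, since the analytic work was done in the preceding theorem.
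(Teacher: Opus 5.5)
Your proof is correct and follows essentially the same route as the paper. The $\sqrt{\log n}$ scale is read off from the sharp inverse asymptotics theorem via $b_n\sim\sqrt{2\log n}$, and the independent-Cauchy quantile comes from solving $\bigl(\frac{2}{\pi}\arctan r\bigr)^n=p$, writing $p^{1/n}=1-\frac{-\log p}{n}(1+o(1))$, and using $\tan\bigl(\frac{\pi}{2}-x\bigr)=\cot x\sim 1/x$. Your version is slightly more careful than the paper's: you justify that the infimum is attained by strict monotonicity, and you make the $\cot x\sim 1/x$ step explicit.
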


\begin{proof}
The first assertion follows from the theorem.  For independent standard Cauchy variables,
\begin{equation}
        \Prob\{|Y_1|\leq r\}=\frac{2}{\pi}\arctan r.
\end{equation}
Therefore
\begin{equation}
        \Prob\left\{\max_{1\leq k\leq n}|Y_k|\leq r\right\}
        =
        \left(\frac{2}{\pi}\arctan r\right)^n.
\end{equation}
Solving at level \(p\) gives
\begin{equation}
        r=\tan\left(\frac{\pi}{2}p^{1/n}\right).
\end{equation}
Since
\begin{equation}
        p^{1/n}=1+\frac{\log p}{n}+o\left(\frac1n\right),
\end{equation}
we obtain
\begin{equation}
        \tan\left(\frac{\pi}{2}p^{1/n}\right)
        \sim
        \frac{2n}{\pi(-\log p)}.
\end{equation}
\end{proof}

\begin{remark}
The comparison shows why the common denominator matters.  The ratio vector in the Gaussian polynomial has heavy-tailed one-dimensional marginals, while its maximum behaves on the half-normal extreme scale after conditioning on the shared denominator.  This produces a \(\sqrt{\log n}\) certificate scale instead of the linear scale associated with independent Cauchy maxima.
\end{remark}

\section{Annular and Rouch\'e--Chernoff extensions}

The preceding sections treated outer disks.  The same certificate principle also gives inner radii and annuli through polynomial reversal, and it gives weighted-sum disk certificates through Rouch\'e's theorem.

\subsection{Annular certificates by reversal}

For
\begin{equation}
        P(z)=\sum_{k=0}^{n}a_kz^k,
        \qquad a_0a_n\neq0,
\end{equation}
define the inner zero radius
\begin{equation}
        \lambdazero(P)=\min\{|z|:P(z)=0\}.
\end{equation}

\begin{lemma}[Reversed Cauchy certificate]
If \(a_0a_n\neq0\), then
\begin{equation}
        \lambdazero(P)
        \geq
        \left(
        1+\max_{1\leq k\leq n}\left|\frac{a_k}{a_0}\right|
        \right)^{-1}.
\end{equation}
\end{lemma}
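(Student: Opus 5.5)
The plan is to deduce the inner bound from Cauchy's outer bound, which the paper has already used repeatedly, applied to the reversed polynomial
\begin{equation}
        P^*(z)=z^nP(1/z)=\sum_{k=0}^{n}a_{n-k}z^k .
\end{equation}
Since \(a_0\neq0\), \(P^*\) has degree exactly \(n\) with leading coefficient \(a_0\). Since \(a_n\neq0\), \(P^*(0)=a_n\neq0\).

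\textbf{Step 1 (zeros of the reversal).} Because \(a_0\neq0\), the polynomial \(P\) has no zero at the origin. For \(z\neq0\) we have \(P^*(1/z)=z^{-n}P(z)\). Hence \(z\mapsto1/z\) is a bijection from the zeros of \(P\) onto the zeros of \(P^*\), and multiplicities are preserved. Both polynomials have exactly \(n\) zeros, so the correspondence is complete.

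\textbf{Step 2 (Cauchy's bound for \(P^*\)).} Apply Cauchy's theorem to \(P^*\), in the normalized form used for \(Q_n\) in Section~4, with each coefficient divided by the leading coefficient. The coefficients of \(P^*\) other than the leading one are \(a_{n-j}\) for \(0\leq j\leq n-1\), that is, \(a_k\) for \(1\leq k\leq n\). Every zero \(w\) of \(P^*\) therefore satisfies
\begin{equation}
        |w|\leq 1+\max_{1\leq k\leq n}\left|\frac{a_k}{a_0}\right|.
\end{equation}

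\textbf{Step 3 (invert).} Take any zero \(z\) of \(P\). By Step 1, \(w=1/z\) is a zero of \(P^*\), so Step 2 gives \(|z|^{-1}\leq 1+\max_{1\leq k\leq n}|a_k/a_0|\). The right-hand side is positive, so taking reciprocals gives the stated lower bound for \(|z|\). Minimizing over the zeros of \(P\) yields the bound for \(\lambdazero(P)\).

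There is no substantial obstacle. The only point needing care is the index bookkeeping in Step 2, namely checking that the reversal sends \(a_k\) to the coefficient of \(z^{n-k}\), so that the maximum runs over \(1\leq k\leq n\) and excludes the new leading coefficient \(a_0\). The hypothesis \(a_0a_n\neq0\) is what guarantees that \(P^*\) has full degree and that no zeros of \(P\) are lost at \(0\) or at infinity.
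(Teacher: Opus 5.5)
Your proposal is correct and follows the paper's proof essentially verbatim: the paper also reverses to $w^nP(1/w)$, divides by the new leading coefficient $a_0$, applies Cauchy's bound to get $|1/\zeta|\le 1+\max_{1\le k\le n}|a_k/a_0|$ for every zero $\zeta$ of $P$, and takes reciprocals. Your explicit check that $a_0a_n\neq0$ gives full degree and a complete zero correspondence is slightly more careful than the paper's version.
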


\begin{proof}
Let
\begin{equation}
        P^\#(w)=w^nP(1/w)
        =a_n+a_{n-1}w+\cdots+a_1w^{n-1}+a_0w^n.
\end{equation}
The zeros of \(P^\#\) are \(w_j=1/\zeta_j\), where \(\zeta_j\) are the zeros of \(P\).  Dividing by \(a_0\) gives the monic polynomial
\begin{equation}
        \frac{P^\#(w)}{a_0}
        =
        w^n+\frac{a_1}{a_0}w^{n-1}+\cdots+\frac{a_{n-1}}{a_0}w+\frac{a_n}{a_0}.
\end{equation}
Cauchy's theorem gives
\begin{equation}
        \max_j|w_j|
        \leq
        1+\max_{1\leq k\leq n}\left|\frac{a_k}{a_0}\right|.
\end{equation}
Since \(|w_j|=|\zeta_j|^{-1}\), the claim follows.
\end{proof}

\begin{theorem}[Gaussian annular certificate]
Let
\begin{equation}
        Q_n(z)=\sum_{k=0}^{n}X_kz^k
\end{equation}
with independent \(N(0,1)\) coefficients.  Let \(0<r<1<R\).  Then
\begin{align}
        &\Prob\{r\leq|\zeta|\leq R\text{ for every zero }\zeta\} \\
        &\qquad\geq
        1
        -n\left(1-\frac{2}{\pi}\arctan(R-1)\right)
        -n\left(1-\frac{2}{\pi}\arctan(r^{-1}-1)\right).
\end{align}
In particular, for \(0<\delta<1\), with
\begin{equation}
        T_{\delta,n}=\cot\left(\frac{\pi\delta}{4n}\right),
\end{equation}
one has
\begin{equation}
        \Prob\left\{
        \frac{1}{1+T_{\delta,n}}
        \leq |\zeta|\leq
        1+T_{\delta,n}
        \text{ for every zero }\zeta
        \right\}
        \geq1-\delta.
\end{equation}
\end{theorem}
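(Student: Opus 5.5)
The plan is to combine the two deterministic certificates already available, namely Cauchy's outer bound and the reversed Cauchy inner bound from the preceding lemma, and then control each failure event by a union bound over Cauchy-distributed ratios. Since \(X_0X_n\neq0\) almost surely, both certificates apply almost surely. If \(\max_{0\leq k\leq n-1}|X_k/X_n|\leq R-1\), then every zero satisfies \(|\zeta|\leq R\). If \(\max_{1\leq k\leq n}|X_k/X_0|\leq r^{-1}-1\), then by the reversed Cauchy certificate every zero satisfies \(|\zeta|\geq r\). Note that \(r^{-1}-1>0\) and \(R-1>0\) because \(0<r<1<R\). The annulus event therefore contains the intersection of these two coefficient events, and its probability is at least \(1\) minus the probabilities of the two complements.

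Next I bound each complement. The outer one is \(\bigcup_{k<n}\{|X_k/X_n|>R-1\}\). Each ratio \(X_k/X_n\) with \(k\neq n\) is standard Cauchy, so \(\Prob\{|X_k/X_n|>t\}=1-\frac{2}{\pi}\arctan t\). The union bound over the \(n\) indices gives exactly the term \(n\bigl(1-\frac{2}{\pi}\arctan(R-1)\bigr)\), just as in the union-bound form proposition. The inner complement is \(\bigcup_{1\leq k\leq n}\{|X_k/X_0|>r^{-1}-1\}\). Again there are \(n\) ratios \(X_k/X_0\) with \(k\neq0\), each standard Cauchy, which yields the second term. This step uses no independence between the two events, and that matters: they share coefficients (for instance \(X_n/X_0\) and \(X_0/X_n\) are reciprocals), so only subadditivity is available. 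This is the one point needing care, and plain subadditivity resolves it.

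For the \(\delta\)-version, I set \(R-1=r^{-1}-1=T_{\delta,n}\), which means \(R=1+T_{\delta,n}\) and \(r=(1+T_{\delta,n})^{-1}\). I need \(T_{\delta,n}>0\), which holds because \(\pi\delta/(4n)\in(0,\pi/2)\); this guarantees \(0<r<1<R\). Then \(\arctan T_{\delta,n}=\frac{\pi}{2}-\frac{\pi\delta}{4n}\), so each failure term equals \(n\cdot\frac{2}{\pi}\cdot\frac{\pi\delta}{4n}=\delta/2\). The total failure probability is therefore at most \(\delta\), which gives the claim.
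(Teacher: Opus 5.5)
Your proposal is correct and follows the paper's proof: the Cauchy outer certificate and the reversed Cauchy inner certificate, a union bound over the \(n\) standard Cauchy ratios in each family, plain subadditivity across the outer and inner failure events, and the choice \(R-1=r^{-1}-1=T_{\delta,n}\), which makes each failure term equal to \(\delta/2\). Your checks that \(T_{\delta,n}>0\) and \(0<r<1<R\) hold, and your remark that no independence between the two events is needed, are details the paper leaves implicit.
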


\begin{proof}
Cauchy's theorem certifies \(\rhozero(Q_n)\leq R\) on the event
\begin{equation}
        \max_{0\leq k\leq n-1}\left|\frac{X_k}{X_n}\right|\leq R-1.
\end{equation}
The reversed Cauchy certificate certifies \(\lambdazero(Q_n)\geq r\) on the event
\begin{equation}
        \max_{1\leq k\leq n}\left|\frac{X_k}{X_0}\right|\leq r^{-1}-1.
\end{equation}
For each ratio, the marginal law is standard Cauchy.  The union bound gives the two displayed failure probabilities.  Applying the union bound over the outer and inner failures proves the annular estimate.  Taking
\begin{equation}
        R-1=T_{\delta,n},
        \qquad
        r^{-1}-1=T_{\delta,n},
\end{equation}
makes each failure probability at most \(\delta/2\).
\end{proof}

\subsection{Rouch\'e--Chernoff disk certificates}

\begin{lemma}[Rouch\'e outer certificate]
Let
\begin{equation}
        P(z)=z^n+\sum_{k=0}^{n-1}a_kz^k.
\end{equation}
If \(R>0\) and
\begin{equation}
        \sum_{k=0}^{n-1}|a_k|R^k<R^n,
\end{equation}
then all zeros of \(P\) lie in \(|z|<R\).
\end{lemma}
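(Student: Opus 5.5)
The plan is to apply Rouch\'e's theorem on the circle \(|z|=R\), comparing \(P\) with the dominant term \(z^n\). Write
\begin{equation}
        P(z)=f(z)+g(z),
        \qquad
        f(z)=z^n,
        \qquad
        g(z)=\sum_{k=0}^{n-1}a_kz^k .
\end{equation}

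First, on \(|z|=R\) the triangle inequality gives
\begin{equation}
        |g(z)|\leq\sum_{k=0}^{n-1}|a_k|R^k<R^n=|f(z)|.
\end{equation}
This strict inequality holds at every point of the circle. In particular \(P(z)\neq0\) there, since \(|P(z)|\geq|f(z)|-|g(z)|>0\).

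Next, Rouch\'e's theorem says that \(P=f+g\) and \(f\) have the same number of zeros, counted with multiplicity, in the open disk \(|z|<R\). The function \(f(z)=z^n\) has exactly \(n\) zeros there, namely \(0\) with multiplicity \(n\). Hence \(P\) has \(n\) zeros in \(|z|<R\).

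Finally, \(P\) has degree exactly \(n\) because it is monic, so it has exactly \(n\) zeros in \(\mathbb C\). All of them therefore lie in \(|z|<R\), which gives \(\rhozero(P)<R\).

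There is no real obstacle. The only points needing care are that the inequality must be strict on the whole circle, which the strict hypothesis supplies, and that the zero count must be compared with the total degree. An alternative that avoids Rouch\'e is a direct argument. For \(|z|\geq R\), the function \(t\mapsto\sum_k|a_k|t^{k-n}\) is nonincreasing, so
\begin{equation}
        |P(z)|\geq|z|^n\Bigl(1-\sum_{k=0}^{n-1}|a_k||z|^{k-n}\Bigr)
        \geq|z|^n\Bigl(1-\sum_{k=0}^{n-1}|a_k|R^{k-n}\Bigr)>0 .
\end{equation}
Thus \(P\) has no zeros in \(|z|\geq R\). I would present the Rouch\'e version, since it matches the lemma's name.
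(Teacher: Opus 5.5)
Your proof is correct and follows the paper's argument: both apply Rouch\'e's theorem to \(z^n\) and \(P\) on \(|z|=R\) using the triangle-inequality bound, and your final step that \(P\) has exactly \(n\) zeros in \(\mathbb{C}\) completes the zero count, which the paper leaves implicit. Your alternative direct estimate for \(|z|\geq R\) is also valid.
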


\begin{proof}
On \(|z|=R\),
\begin{equation}
        |z^n|=R^n,
        \qquad
        \left|\sum_{k=0}^{n-1}a_kz^k\right|
        \leq
        \sum_{k=0}^{n-1}|a_k|R^k.
\end{equation}
The assumed strict inequality allows Rouch\'e's theorem to be applied to \(z^n\) and \(P(z)\).  They have the same number of zeros inside \(|z|<R\), counted with multiplicity.
\end{proof}

\begin{theorem}[Chernoff zero certificate]
Let
\begin{equation}
        P_n(z)=z^n+\sum_{k=0}^{n-1}X_kz^k,
\end{equation}
where \(X_0,\ldots,X_{n-1}\) are independent real or complex random variables.  Assume that, for a fixed \(R>0\),
\begin{equation}
        \E e^{\lambda R^k|X_k|}<\infty
\end{equation}
for \(0<\lambda<\lambda_0\) and \(0\leq k\leq n-1\).  Then
\begin{equation}
        \Pi_{P_n}(R)
        \geq
        1-
        \inf_{0<\lambda<\lambda_0}
        \left[
        e^{-\lambda R^n}
        \prod_{k=0}^{n-1}\E e^{\lambda R^k|X_k|}
        \right].
\end{equation}
\end{theorem}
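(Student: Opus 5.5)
The plan is to combine the Rouch\'e outer certificate with a Chernoff bound on the weighted sum
\begin{equation}
        S_R=\sum_{k=0}^{n-1}|X_k|R^k .
\end{equation}
By the Rouch\'e outer certificate, on the event \(\{S_R<R^n\}\) every zero of \(P_n\) lies in \(|z|<R\). In particular \(\rhozero(P_n)\leq R\) there. The certificate-transfer principle then gives
\begin{equation}
        \Pi_{P_n}(R)\geq\Prob\{S_R<R^n\}=1-\Prob\{S_R\geq R^n\}.
\end{equation}
So it suffices to bound the failure probability \(\Prob\{S_R\geq R^n\}\) by the bracketed quantity for each \(\lambda\in(0,\lambda_0)\), and then take the infimum.

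Fix \(\lambda\in(0,\lambda_0)\). Since \(x\mapsto e^{\lambda x}\) is increasing and positive, Markov's inequality gives
\begin{equation}
        \Prob\{S_R\geq R^n\}=\Prob\{e^{\lambda S_R}\geq e^{\lambda R^n}\}\leq e^{-\lambda R^n}\E e^{\lambda S_R}.
\end{equation}
The variables \(|X_0|,\ldots,|X_{n-1}|\) are independent, so \(e^{\lambda R^k|X_k|}\) are independent nonnegative random variables. Hence \(\E e^{\lambda S_R}=\prod_{k=0}^{n-1}\E e^{\lambda R^k|X_k|}\), and each factor is finite by hypothesis. This yields
\begin{equation}
        \Pi_{P_n}(R)\geq 1-e^{-\lambda R^n}\prod_{k=0}^{n-1}\E e^{\lambda R^k|X_k|}
\end{equation}
for every admissible \(\lambda\). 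Since the left side does not depend on \(\lambda\), taking the supremum of the right side, which is the same as the infimum of the subtracted term, gives the stated bound.

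There is no real obstacle. The only points to be careful about are the following.
\begin{enumerate}[label=\textup{(\alph*)}]
\item The strict inequality in the Rouch\'e lemma is harmless, because \(\{S_R<R^n\}\supseteq\{S_R\leq R^n\}\) fails only on \(\{S_R\geq R^n\}\), which is exactly the event Markov's inequality controls.
\item Measurability of \(S_R\) is immediate.
\item If the infimum exceeds \(1\), the bound is trivially true.
\end{enumerate}
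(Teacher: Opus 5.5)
Your proposal is correct and follows the paper's proof essentially step for step: the Rouch\'e certificate on $\{S_R<R^n\}$, Markov's inequality applied to $e^{\lambda S_R}$, factorization of $\E e^{\lambda S_R}$ by independence, and optimization over $\lambda\in(0,\lambda_0)$. The only blemish is side remark (a), where the inclusion $\{S_R<R^n\}\supseteq\{S_R\le R^n\}$ is written backwards; what you actually need, and correctly use in the main argument, is that the complement of $\{S_R<R^n\}$ is exactly $\{S_R\ge R^n\}$.
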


\begin{proof}
Set
\begin{equation}
        S_R=\sum_{k=0}^{n-1}|X_k|R^k.
\end{equation}
The Rouch\'e certificate gives
\begin{equation}
        \{S_R<R^n\}\subseteq\{\rhozero(P_n)<R\}.
\end{equation}
Thus
\begin{equation}
        \Pi_{P_n}(R)
        \geq1-\Prob\{S_R\geq R^n\}.
\end{equation}
For \(0<\lambda<\lambda_0\), Markov's inequality gives
\begin{equation}
        \Prob\{S_R\geq R^n\}
        \leq
        e^{-\lambda R^n}\E e^{\lambda S_R}.
\end{equation}
By independence,
\begin{equation}
        \E e^{\lambda S_R}
        =
        \prod_{k=0}^{n-1}\E e^{\lambda R^k|X_k|}.
\end{equation}
Taking the infimum over \(\lambda\) completes the proof.
\end{proof}

\begin{corollary}[Gaussian Rouch\'e--Chernoff certificate]
If \(X_0,\ldots,X_{n-1}\) are independent \(N(0,1)\) variables, then
\begin{equation}
        \Pi_{P_n}(R)
        \geq
        1-
        \inf_{\lambda>0}
        \exp(-\lambda R^n)
        \prod_{k=0}^{n-1}
        \left[
        2\exp\left(\frac{\lambda^2R^{2k}}{2}\right)
        \Phi(\lambda R^k)
        \right],
\end{equation}
where
\begin{equation}
        \Phi(t)=\frac{1}{\sqrt{2\pi}}\int_{-\infty}^{t}e^{-u^2/2}\dd u.
\end{equation}
\end{corollary}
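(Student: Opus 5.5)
This corollary follows from the Chernoff zero certificate once the Gaussian moment generating function of \(|X_k|\) is in hand. Let \(X\sim N(0,1)\). For every \(t\ge0\) we have \(\E e^{t|X|}<\infty\), so the integrability hypothesis of the Chernoff theorem holds with \(\lambda_0=\infty\). The infimum over \(0<\lambda<\lambda_0\) therefore becomes the infimum over \(\lambda>0\).

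The one computation is the half-normal moment generating function. By symmetry,
\begin{equation}
        \E e^{t|X|}
        =\frac{2}{\sqrt{2\pi}}\int_0^\infty e^{tu-u^2/2}\dd u .
\end{equation}
Completing the square gives \(tu-u^2/2=t^2/2-(u-t)^2/2\). Substituting \(v=u-t\) turns the integral into
\begin{equation}
        2e^{t^2/2}\,\frac{1}{\sqrt{2\pi}}\int_{-t}^{\infty}e^{-v^2/2}\dd v
        =2e^{t^2/2}\bigl(1-\Phi(-t)\bigr)
        =2e^{t^2/2}\Phi(t).
\end{equation}

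Apply this with \(t=\lambda R^k\) for \(0\le k\le n-1\), using independence exactly as in the Chernoff theorem. The product \(\prod_{k=0}^{n-1}\E e^{\lambda R^k|X_k|}\) then equals the displayed product \(\prod_{k}2\exp(\lambda^2R^{2k}/2)\Phi(\lambda R^k)\). Substituting into the Chernoff bound gives the stated inequality.

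There is no real obstacle. The only care points are the reflection identity \(1-\Phi(-t)=\Phi(t)\) and confirming that finiteness for all \(\lambda>0\) lets the infimum range over all of \((0,\infty)\).
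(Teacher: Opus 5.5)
Your proposal is correct and matches the paper's proof: compute the half-normal moment generating function \(\E e^{s|X|}=2e^{s^2/2}\Phi(s)\) and substitute \(s=\lambda R^k\) into the Chernoff zero certificate. You also make explicit two steps the paper leaves implicit, namely the completing-the-square computation and the choice \(\lambda_0=\infty\), which is justified because the Gaussian moment generating function is finite for every \(\lambda>0\).
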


\begin{proof}
For \(X\sim N(0,1)\),
\begin{align}
        \E e^{s|X|}
        &=
        \frac{2}{\sqrt{2\pi}}\int_0^\infty e^{sx}e^{-x^2/2}\dd x \\
        &=
        2e^{s^2/2}\Phi(s).
\end{align}
Insert \(s=\lambda R^k\) into the preceding theorem.
\end{proof}

\section{Numerical simulations and certificate comparison}
\label{sec:numerical_simulations}

This section reports a reproducible finite-degree comparison of the Cauchy, optimized Cauchy, annular, and Rouch\'e certificates.  The simulations use monic Gaussian polynomials
\begin{equation}
        P_n(z)=z^n+\sum_{k=0}^{n-1}a_kz^k,
        \qquad
        a_0,\ldots,a_{n-1}\stackrel{\rm i.i.d.}{\sim}N(0,1).
\end{equation}
For each sample, the true outer and inner zero radii are
\begin{equation}
        \rho_{\rm true}=\max_{P_n(\zeta)=0}|\zeta|,
        \qquad
        \lambda_{\rm true}=\min_{P_n(\zeta)=0}|\zeta|.
\end{equation}
The zeros are computed only for sharpness diagnostics.  The certificates themselves are computed directly from the coefficient vector.

The four numerical certificates are as follows.  The classical Cauchy radius is
\begin{equation}
        B_C=1+\max_{0\leq k\leq n-1}|a_k|.
\end{equation}
The optimized Cauchy envelope is
\begin{equation}
        B_{\rm opt}=\inf_{s>0}s\left(1+\max_{0\leq k\leq n-1}|a_k|s^{k-n}\right).
\end{equation}
In the implementation, the minimization is performed with \(s=e^\theta\) over \(\theta\in[-12,12]\), which comfortably contains the minimizer for the sampled Gaussian coefficient ranges.  The Rouch\'e radius \(B_R\) is the positive solution of
\begin{equation}
        R^n=\sum_{k=0}^{n-1}|a_k|R^k.
\end{equation}
For every \(R>B_R\), Rouch\'e's theorem certifies all zeros in \(|z|<R\).  The annular inner certificate is
\begin{equation}
        r_A=\left(1+\max\left\{\left|\frac{a_1}{a_0}\right|,\ldots,
        \left|\frac{a_{n-1}}{a_0}\right|,\left|\frac{1}{a_0}\right|\right\}\right)^{-1}.
\end{equation}
Thus the annular certificate gives
\begin{equation}
        r_A\leq |\zeta|\leq B_R
        \qquad \text{for every zero }\zeta.
\end{equation}

The simulation uses the fixed seed \(20260607\), degrees \(n\in\{20,50,100\}\), and \(M=5000\) independent samples for each degree.  Table~\ref{tab:simulation_sharpness} reports the mean, median, and \(90\%\)-quantile of the sharpness ratios.  Values closer to \(1\) indicate sharper certificates.  In all \(15000\) polynomial samples, the deterministic inequalities
\begin{equation}
        \rho_{\rm true}\leq B_C,
        \qquad
        \rho_{\rm true}\leq B_{\rm opt},
        \qquad
        \rho_{\rm true}\leq B_R,
        \qquad
        r_A\leq\lambda_{\rm true}
\end{equation}
were satisfied up to numerical precision.

\begin{table}[t]
\caption{Monte Carlo sharpness statistics for monic Gaussian polynomials.  Each row uses \(M=5000\) independent samples.  The outer ratios are \(B_C/\rho_{\rm true}\), \(B_{\rm opt}/\rho_{\rm true}\), and \(B_R/\rho_{\rm true}\); the inner ratio is \(\lambda_{\rm true}/r_A\).  Values closer to \(1\) indicate sharper certificates.}
\label{tab:simulation_sharpness}
\centering
\begin{tabular}{c l c c c}
\toprule
Degree & Statistic & Mean & Median & 90\% quantile\\
\midrule
20 & \(B_C/\rho_{\rm true}\) & 2.181 & 2.178 & 2.742\\
 & \(B_{\rm opt}/\rho_{\rm true}\) & 1.547 & 1.526 & 1.827\\
 & \(B_R/\rho_{\rm true}\) & 1.242 & 1.215 & 1.455\\
 & \(\lambda_{\rm true}/r_A\) & 2.693 & 2.200 & 4.055\\
 & \((B_R-r_A)/(\rho_{\rm true}-\lambda_{\rm true})\) & 1.856 & 1.716 & 2.655\\
\midrule
50 & \(B_C/\rho_{\rm true}\) & 2.413 & 2.428 & 3.022\\
 & \(B_{\rm opt}/\rho_{\rm true}\) & 1.542 & 1.526 & 1.821\\
 & \(B_R/\rho_{\rm true}\) & 1.237 & 1.213 & 1.446\\
 & \(\lambda_{\rm true}/r_A\) & 3.048 & 2.496 & 4.880\\
 & \((B_R-r_A)/(\rho_{\rm true}-\lambda_{\rm true})\) & 1.887 & 1.762 & 2.725\\
\midrule
100 & \(B_C/\rho_{\rm true}\) & 2.596 & 2.627 & 3.207\\
 & \(B_{\rm opt}/\rho_{\rm true}\) & 1.551 & 1.527 & 1.838\\
 & \(B_R/\rho_{\rm true}\) & 1.243 & 1.213 & 1.464\\
 & \(\lambda_{\rm true}/r_A\) & 3.287 & 2.677 & 5.088\\
 & \((B_R-r_A)/(\rho_{\rm true}-\lambda_{\rm true})\) & 1.928 & 1.790 & 2.779\\
\bottomrule
\end{tabular}
\end{table}

Figure~\ref{fig:outer_radius_ecdf} displays the empirical distribution functions of the true outer radius and the three outer certificates for \(n=50\).  The true radius concentrates close to the unit circle, as expected for Gaussian random polynomials.  The Cauchy radius is visibly conservative.  The optimized Cauchy envelope shifts the certificate distribution substantially toward the true radius, and the Rouch\'e certificate gives the tightest distribution among the tested outer certificates.

\begin{figure}[t]
\centering
\includegraphics[width=0.88\linewidth]{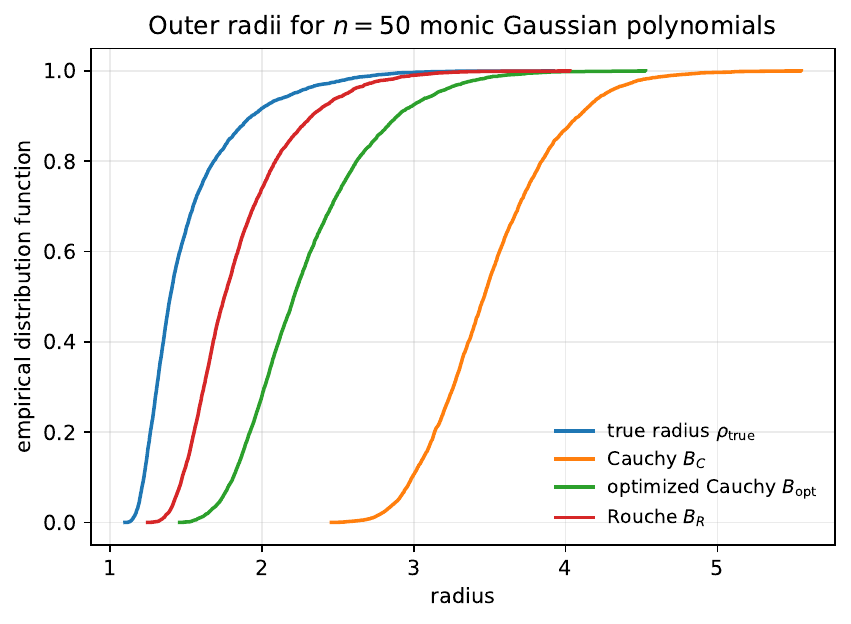}
\caption{Empirical distribution functions of the true outer radius and the three outer certificates for \(n=50\).}
\label{fig:outer_radius_ecdf}
\end{figure}

Figure~\ref{fig:outer_ratio_by_degree} summarizes the median and \(90\%\)-quantile of the outer sharpness ratios as the degree increases.  The classical Cauchy ratio grows mildly with \(n\), while the optimized Cauchy and Rouch\'e ratios remain comparatively stable over this degree range.  The Rouch\'e median is approximately \(1.21\) for all three tested degrees, which means that the Rouch\'e radius exceeds the observed outer zero radius by roughly \(21\%\) at the median.

\begin{figure}[t]
\centering
\includegraphics[width=0.88\linewidth]{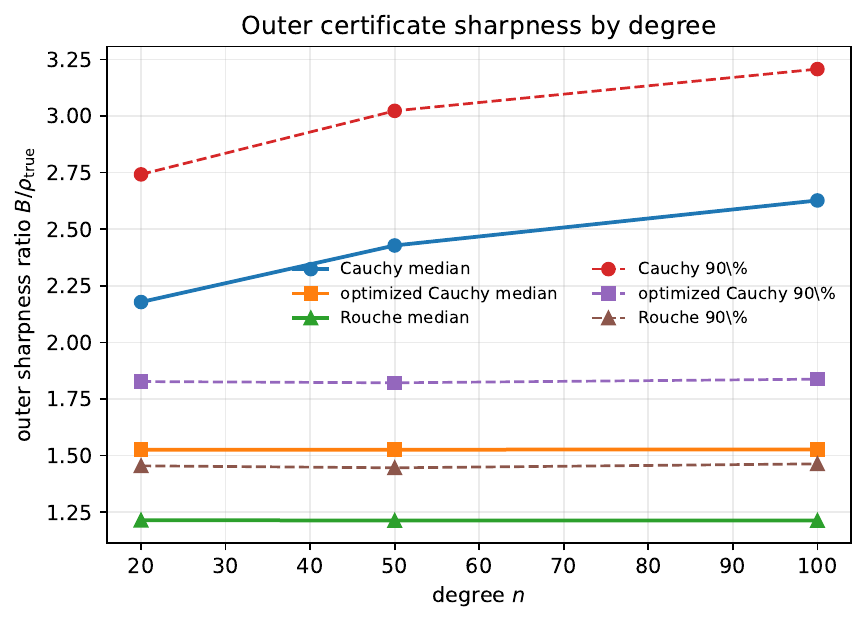}
\caption{Median and \(90\%\)-quantile sharpness ratios for the three outer certificates.}
\label{fig:outer_ratio_by_degree}
\end{figure}

Figure~\ref{fig:inner_ratio_ecdf} shows the empirical distribution of the inner annular sharpness ratio \(\lambda_{\rm true}/r_A\) for \(n=50\).  The median value is \(2.496\), so the reversed-Cauchy lower radius is conservative by a factor of about \(2.5\) at the median in this ensemble.  This is consistent with the structure of the reversed certificate, since it is controlled by ratios to the constant coefficient \(a_0\), and small values of \(|a_0|\) widen the certified annulus.

\begin{figure}[t]
\centering
\includegraphics[width=0.88\linewidth]{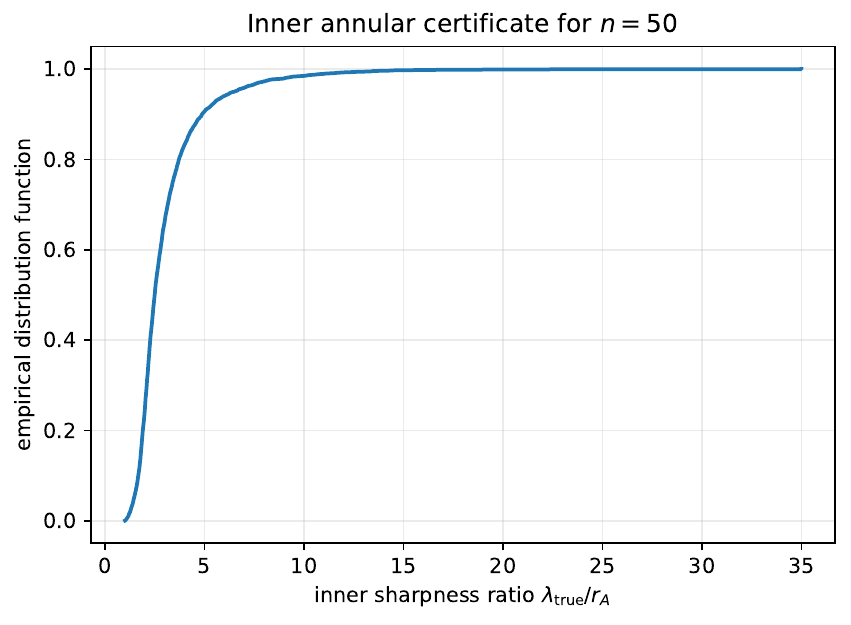}
\caption{Empirical distribution of the inner annular sharpness ratio \(\lambda_{\rm true}/r_A\) for \(n=50\).}
\label{fig:inner_ratio_ecdf}
\end{figure}

Figure~\ref{fig:annular_width_scatter} compares the true annular width \(\rho_{\rm true}-\lambda_{\rm true}\) with the certified width \(B_R-r_A\) for a representative subset of the \(n=50\) samples.  The points lie above the diagonal up to numerical precision, as required by the certificate inequalities.  The median certified-to-true width ratio is \(1.762\) for \(n=50\), which indicates that the two-sided certificate remains informative while retaining rigorous containment.

\begin{figure}[t]
\centering
\includegraphics[width=0.82\linewidth]{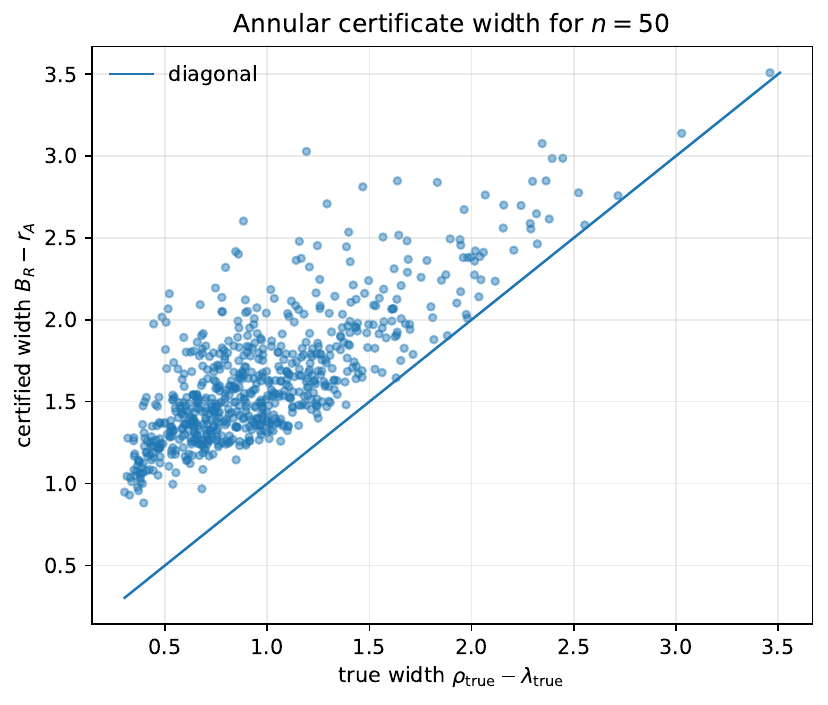}
\caption{True annular width versus certified annular width for a representative subset of \(n=50\) samples.}
\label{fig:annular_width_scatter}
\end{figure}

Figure~\ref{fig:dependent_ratio_quantile} gives a numerical check of the main dependence-aware theorem.  The \(90\%\)-quantile of the dependent Gaussian ratio maximum is computed from the exact integral \(I_n\), and it is compared with the independent-Cauchy quantile and the asymptotic scale \(b_n/a_p\).  The dependent curve follows the logarithmic extreme-value scale, while the independent-Cauchy curve follows the much larger linear scale.  This supports the conclusion that the common denominator in the Gaussian coefficient ratios is a decisive part of the finite-degree certificate.

\begin{figure}[t]
\centering
\includegraphics[width=0.88\linewidth]{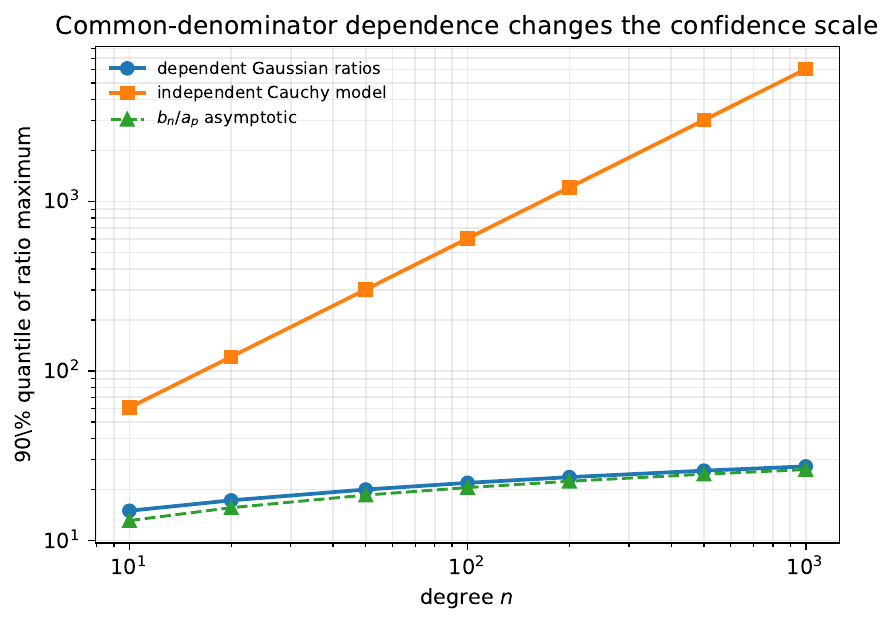}
\caption{Dependence-aware Gaussian ratio quantile compared with the independent-Cauchy model and the asymptotic scale \(b_n/a_p\) at confidence \(p=0.9\).}
\label{fig:dependent_ratio_quantile}
\end{figure}

The numerical study has a deliberately limited scope.  It uses real monic Gaussian coefficients, degrees up to \(100\), and standard companion-polynomial root computation for the diagnostic radii.  The observed ranking of certificates should therefore be read as finite-degree evidence for this ensemble.  Heavy-tailed coefficient laws, complex coefficients, sparse coefficient laws, and very high-degree polynomials may change the empirical sharpness ranking, while the deterministic certificate inequalities themselves remain valid under their stated hypotheses.

\FloatBarrier

\section{Conclusion}

This paper develops finite-degree probabilistic zero certificates for random polynomials.  The framework begins with a deterministic zero-location theorem and treats its bound as a random certificate after the coefficients are sampled.  This gives computable lower or upper estimates for zero-containment probabilities without using the joint law of the zeros.

The manuscript is a single-author continuation of the author's earlier joint work with Mir \cite{SheikhMir2024}.  The earlier paper introduced Cauchy-type probabilistic zero bounds for certain random polynomials.  The present paper keeps that origin explicit and extends it in several directions: a general certificate-transfer theorem, monic coefficient-law formulas, sub-Weibull confidence radii, a scaled Cauchy envelope, annular certificates, and Rouch\'e--Chernoff estimates.

The main new theorem concerns general Gaussian polynomials with random leading coefficient.  Cauchy's bound produces ratios \(X_k/X_n\).  Although each ratio has the standard Cauchy distribution, the ratios are dependent through the common denominator.  The exact dependence-aware certificate probability is
\begin{equation}
        I_n(r)=\sqrt{\frac{2}{\pi}}
        \int_0^\infty e^{-x^2/2}
        \left[
        \erf\left(\frac{rx}{\sqrt2}\right)
        \right]^n\dd x.
\end{equation}
The inverse confidence scale satisfies
\begin{equation}
        I_n^{-1}(p)\sim \frac{b_n}{a_p}
        \asymp \sqrt{\log n},
\end{equation}
whereas independent Cauchy maxima have linear scale in \(n\).  This dependence-aware correction is the central mathematical gain of the certificate formulation.

The simulations add finite-degree evidence for the certificate hierarchy in the monic Gaussian ensemble.  For \(n=20,50,100\) and \(5000\) samples per degree, the Rouch\'e certificate gives the smallest median outer sharpness ratio, around \(1.21\), followed by the optimized Cauchy envelope, around \(1.53\), and then the classical Cauchy certificate.  The annular certificate gives a rigorous two-sided localization region, with a median certified-to-true width ratio between \(1.72\) and \(1.79\) over the tested degrees.  These computations indicate that the certificate-transfer framework is both analytically tractable and numerically testable at finite degree.

Future refinements may replace Cauchy's theorem by sharper deterministic localization theorems, optimize certificate families for special coefficient distributions, and extend the numerical comparison to heavy-tailed, complex, sparse, and structured random polynomials.  The same method also applies to companion matrices, recurrence stability, and randomized polynomial filters, since each of these problems converts zero localization into a finite-sample spectral certificate.

\end{document}